  \pgfpointspherical{\tikz@cs@angle}{\tikz@cs@latitude}{\tikz@cs@xradius}% fix \tikz@cs@radius to \tikz@cs@xradius
\tikzset{my color/.code=\pgfmathparse{(#1+90)/180*100}\pgfkeysalso{every path/.style={color=red!\pgfmathresult!blue}}}
\DeclareRobustCommand{\SkipTocEntry}[5]{}
\newtheorem{prop}{Proposition}[subsection]
\newtheorem{thm}[prop]{Theorem}
\newtheorem{cor}{Corollary}[prop]
\newtheorem{lem}[cor]{Lemma}
\theoremstyle{definition}
\newtheorem{defn}[prop]{Definition}
\newtheorem{notn}[prop]{Notation}
\theoremstyle{remark}
\newtheorem{rem}[cor]{Remark}
\numberwithin{equation}{prop}
\DeclareMathOperator{\ad}{ad}
\DeclareMathOperator{\Arr}{Arr}
\DeclareMathOperator{\Aut}{Aut}
\DeclareMathOperator{\BAut}{\mathbf{Aut}}
\DeclareMathOperator{\Bun}{Bun}
\DeclareMathOperator{\Cart}{Cart}
\DeclareMathOperator{\Cat}{Cat}
\DeclareMathOperator{\cod}{cod}
\DeclareMathOperator{\End}{End}
\DeclareMathOperator{\Ext}{Ext}
\DeclareMathOperator{\Grp}{Grp}
\DeclareMathOperator{\Grpd}{Grpd}
\DeclareMathOperator{\Hom}{Hom}
\DeclareMathOperator{\id}{id}
\DeclareMathOperator{\Lie}{Lie}
\DeclareMathOperator{\Map}{Map}
\DeclareMathOperator{\Obj}{Obj}
\DeclareMathOperator{\res}{res}
\DeclareMathOperator{\Spec}{Spec}
\newcommand{\natmap}[5]{\xymatrix{
#1 \ar@/^/[r]^{#3} \ar@/_/[r]_{#4} \ar@{}[r]|{\Downarrow_{#5}} & #2
}}
\newcommand{\adjoints}[4]{\xymatrix{#1 \ar@/^/[rr]^{#3} & \perp & #2 \ar@/^/[ll]^{#4}}}
\newcommand{\smalladjoints}[4]{\xymatrix@C=5pt{#1 \ar@/^/[rr]^{#3} & \perp & #2 \ar@/^/[ll]^{#4}}}
\newcommand{\xtworightarrows}[4]{\xymatrix{#1 \ar@<.5ex>[r]^{#3} \ar@<-.5ex>[r]_{#4} & #2}}
\newcommand{\xtwoleftarrows}[4]{\xymatrix{#1 &  \ar@<-.5ex>[l]_{#3} \ar@<.5ex>[l]^{#4}  #2}}
\newcommand{\xrightleftarrows}[4]{\xymatrix{#1 \ar@/^/[r]^{#3} & #2 \ar@/^/[l]^{#4}}}
\newcommand{\op}[0]{\mathrm{op}}
\newcommand{\pr}[0]{\mathrm{pr}}
\title{Extensions of $\infty$-group sheaves}
\author{P\'al Zs\'amboki}
\email{pzsambok@uwo.ca}
\address{Department of Mathematics \\
Middlesex College \\
The University of Western Ontario \\
London, Ontario \\
Canada, N6A 5B7}
\begin{document}

\begin{abstract}

Let $\mathscr X$ be an $\infty$-topos, for example the $\infty$-category of simplicial sheaves on a Grothendieck site. Then $\infty$-group sheaves are group objects in $\mathscr X$. Let $A\in\Grp\mathscr X$ be such a group object. Then as $\mathscr X$ is an $\infty$-topos, there exists a universal $\mathbf BA$-fiber bundle $\mathbf BA\sslash\BAut A\xrightarrow q\mathbf B\BAut A$. We make $q$ pointed, and show that as a pointed map, via the looping-delooping equivalence, it is a universal extension of group objects by $A$. In particular, semidirect products of group objects by $A$ are classified by $\mathbf BA\sslash\BAut A$.

\end{abstract}

\maketitle

\tableofcontents

\section*{Introduction}

The construction of Lie algebras on group sheaves as described in \cite{sga3}*{Expos\'e II} naturally involves extensions of group sheaves.  We are interested in generalizing this construction to $\infty$-group sheaves and so were motivated to first examine extensions of $\infty$-group sheaves. Let us first review the Lie algebra of group sheaves construction.

Let $\mathscr X$ be a big site of a scheme $S$, and let $G\in\Grp\mathscr X$ be a group sheaf. Then the tangent sheaf $TG=\Hom_S(\Spec(\mathscr O_S\oplus\varepsilon\mathscr O_S),G)$ fits into a semi-direct product of group sheaves
$$
0\to\Lie G\to TG\to G\to1.
$$
This corresponds to the adjoint action map $G\xrightarrow\ad\Aut_{\Grp}(\Lie G)$. One can show that this action is $\mathscr O_S$-linear. Applying the $\Lie$ functor to this action map, we get an $\mathscr O_S$-linear morphism
$$
\Lie G\to\Lie(\Aut_{\mathscr O_S}\Lie G)\cong\End_{\mathscr O_S}(\Lie G),
$$
which in turn gives the Lie bracket map.

One can see that in order to generalize this construction to the case of $\infty$-group sheaves, we need to get a handle on semidirect products of $\infty$-group sheaves. In particular, we need to have an analogue of the adjoint action map classifying a semidirect product. In this article, we classify semidirect products of $\infty$-group sheaves.

In section 1.1, we review small object classifiers in $\infty$-topoi \cite{lurie2009higher}*{\S6.1.6}. In section 1.2, we review how one can use small object classifiers to classify fiber bundles \cite{nikolaus2014principal}*{\S4.1}. Let $A\in\Grp\mathscr X$ be a group object. Then there exists a universal $\mathbf BA$-fiber bundle $\mathbf BA\sslash\BAut A\xrightarrow q\mathbf B\BAut A$ in $\mathscr X$. After establishing some preliminary results about pointed $\infty$-categories in section 2.1, we prove our classification statement in section 2.2. We make $q$ pointed, and show that as a pointed map, via the looping-delooping equivalence, it is a universal extension of group objects by $A$. In particular, semidirect products of group objects are classified by $\mathbf BA\sslash\BAut A$.

Our interest in Lie algebras of $\infty$-group sheaves comes from the following. Let $G$ be an algebraic group on a scheme $S$, and let $X\xrightarrow fS$ be a morphism of schemes. Via the adjoint action map $G\xrightarrow\ad\Aut(\Lie G)$, we can get a twisting map $f_*\mathbf BG\to f_*\mathrm{Form}(\Lie G)$, where the codomain is the stack of families of Lie algebras locally isomorphic to $\Lie G$. It might be possible to compactify $f_*\mathrm{Form}(\Lie G)$ by letting the Lie algebras degenerate to Lie algebras on perfect complexes, which in turn could give information about limits in the stack of families of $G$-torsors $f_*\mathbf BG$. But for this, we need to handle $\Lie G$, in the case where $G$ is the automorphism $\infty$-group of forms of a perfect complex.

An analogous result to ours has been obtained in \cite{jardine2015local}*{Theorem 9.66}, but there only extensions of groupoid sheaves by group sheaves are classified, and not $\infty$-groups, which we need so that we can encode higher homotopies of perfect complexes.

We would like to thank Ajneet Dhillon, Nicole Lemire and Chris Kapulkin for the fruitful conversations on the topic of the article.

\section{Groups and principal bundles in an $\infty$-topos}

\subsection{Small object classifiers in an $\infty$-topos}

\begin{notn}

Let $\mathscr X$ be an $\infty$-category. Then we let $\Arr {\mathscr X}=\mathscr X^{\Delta^1}\xrightarrow{\res_1}\mathscr X$ denote the \emph{codomain fibration}. Let $S$ be a collection of morphisms in $\mathscr X$. Then let $\Arr^S{\mathscr X}\subseteq\Arr{\mathscr X}$ denote the full subcategory spanned by $S$. Suppose that $\mathscr X$ has pullbacks, and that $S$ is closed under pullbacks. Then we denote by $\Cart{\mathscr X}\subseteq\Arr_{\mathscr X}$ the 2-full subcategory with morphisms the pullback squares, and by $\Cart^S{\mathscr X}=\Cart{\mathscr X}\cap\Arr^S{\mathscr X}$.

\end{notn}

\begin{rem}

The target map $\Arr{\mathscr X}\to\mathscr X$ is a coCartesian fibration classified by the map $\mathscr X\xrightarrow[f\mapsto f_!]{X\mapsto\mathscr X_{/X}}\Cat_\infty$. Suppose that $\mathscr X$ admits pullbacks. Then the target map is also a Cartesian fibration, classified by $\mathscr X^\op\xrightarrow[f\mapsto f^*]{X\mapsto\mathscr X_{/X}}\Cat_\infty$. Its restriction to $\Cart{\mathscr X}$ is a right fibration classified by $\mathscr X^\op\xrightarrow{X\mapsto(\mathscr X_{/X})^\simeq}\mathscr S$.

\end{rem}

\begin{defn}

Let $\mathscr X$ be an $\infty$-category which admits pullbacks, and let $S$ be a collection of morphisms of $\mathscr X$ which is closed under pullbacks. Then we say that a morphism $X\xrightarrow\pi Y$ in $\mathscr X$ \emph{classifies $S$}, if it is a final object of the category $\Cart^S{\mathscr X}$. In this case, we also say that \emph{$Y\in\mathscr X$ is a classifying object for $S$}, and that $\pi$ is a \emph{universal map of property $S$}.

\end{defn}

\begin{defn}

Suppose that $\pi$ classifies $S$. Consider the following zigzag.
$$
\Cart^S{\mathscr X}\leftarrow(\Cart^S{\mathscr X})_{/\pi}\xrightarrow{\cod_{/\pi}}\mathscr X_{/Y}
$$
The left-hand arrow is a trivial fibration by definition. Since the map $\Cart^S\mathscr X\xrightarrow\cod\mathscr X$ is the restriction to the full subcategory of $\cod$-Cartesian edges of the Cartesian fibration $\Arr^S(\mathscr X)\xrightarrow\cod\mathscr X$, the right-hand arrow is a trivial fibration by Lemma \ref{lem:overcat of right fib}. This explains why $\pi$ is called a universal map of property $S$: every morphism $X'\xrightarrow fY'$ of property $S$ fits into an essentially unique Cartesian diagram of the form
\begin{center}

\begin{tikzpicture}[scale=1.5]

\node (Pnb) at (0,1) {$X'$};
\node (Pn) at (1,1) {$X$};
\node (Gnb) at (0,0) {$Y'$};
\node (Gn) at (1,0) {$Y.$};
\node at (0.5,0.5) {$\lrcorner$};
\path[->,font=\scriptsize,>=angle 90]
(Pnb) edge (Pn)
(Pnb) edge node [right] {$f$} (Gnb)
(Gnb) edge node [above] {$\mathbf c_f$} (Gn)
(Pn) edge node [right] {$\pi$} (Gn);

\end{tikzpicture}

\end{center}
In this situation, the map $\mathbf c_{f}$ is called the map \emph{classifying $f$}.

\end{defn}

\begin{lem}\label{lem:overcat of right fib}

Let $K\xrightarrow pK'$ be a right fibration of simplicial sets, $x\in K$ a vertex, and $K_{/x}\xrightarrow{p_{/x}}K'_{/p(x)}$ the induced map on overcategories. Then the map $p_{/x}$ is a trivial fibration.

\end{lem}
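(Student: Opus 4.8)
The plan is to show directly that $p_{/x}$ has the right lifting property against every boundary inclusion $\partial\Delta^n\hookrightarrow\Delta^n$ for $n\ge 0$, which is precisely the definition of a trivial fibration. The mechanism is the join--slice adjunction. Recall that $K_{/x}$ is characterized by $\Hom(A,K_{/x})=\Hom_x(A\star\Delta^0,K)$, the subscript meaning that the cone factor $\Delta^0$ is sent to $x$, and similarly $\Hom(A,K'_{/p(x)})=\Hom_{p(x)}(A\star\Delta^0,K')$, with $p_{/x}$ acting by postcomposition with $p$. Therefore a lifting problem for $p_{/x}$ against $\partial\Delta^n\hookrightarrow\Delta^n$ is exactly the datum of a lifting problem for $p$ against $\partial\Delta^n\star\Delta^0\hookrightarrow\Delta^n\star\Delta^0$ in which the prescribed maps carry the cone factor $\Delta^0$ to $x$, respectively to $p(x)$.

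Next I would carry out the combinatorial identification of this inclusion. We have $\Delta^n\star\Delta^0\cong\Delta^{n+1}$, with the cone factor becoming the last vertex $n+1$. The non-degenerate simplices of $\partial\Delta^n\star\Delta^0$ are the joins $\sigma\star\tau$ with $\sigma$ a proper face of $\Delta^n$ and $\tau\in\{\emptyset,\Delta^0\}$; the top-dimensional ones are exactly the facets of $\Delta^{n+1}$ containing the vertex $n+1$, that is, all facets except $d_{n+1}\Delta^{n+1}$. Hence $\partial\Delta^n\star\Delta^0\cong\Lambda^{n+1}_{n+1}$, the right horn at the cone vertex, and the transported problem asks for an extension along $\Lambda^{n+1}_{n+1}\hookrightarrow\Delta^{n+1}$. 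Since $0<n+1\le n+1$, such an extension exists because $p$ is a right fibration.

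It remains to transport the lift back. A lift $\Delta^{n+1}\to K$ obtained above restricts on $\Lambda^{n+1}_{n+1}$ to the given map $\partial\Delta^n\star\Delta^0\to K$, and the vertex $n+1$ lies in $\Lambda^{n+1}_{n+1}$, so the lift still sends the cone vertex to $x$; thus it corresponds, under the adjunction, to the required diagonal $\Delta^n\to K_{/x}$. This yields the RLP against all $\partial\Delta^n\hookrightarrow\Delta^n$, so $p_{/x}$ is a trivial fibration. I expect the only delicate point to be the bookkeeping in the first two steps: fixing the orientation in the slice convention (which side the join factor sits on, cone vertex first or last) and, correspondingly, checking that $\partial\Delta^n\star\Delta^0$ is the horn missing the face \emph{opposite} the cone vertex --- i.e. the right horn, matching the hypothesis that $p$ is a right fibration --- rather than an inner or left horn. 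Once these identifications are pinned down, the rest is forced.
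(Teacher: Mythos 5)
Your proof is correct and is essentially the same argument as the paper's: via the join--slice adjunction the lifting problem for $p_{/x}$ against $\partial\Delta^n\hookrightarrow\Delta^n$ is transported to a lifting problem for $p$ against $\partial\Delta^n\star\Delta^0\hookrightarrow\Delta^n\star\Delta^0$, i.e.\ against the right horn inclusion $\Lambda^{n+1}_{n+1}\hookrightarrow\Delta^{n+1}$, which is solvable since $p$ is a right fibration. The only difference is that you spell out the combinatorial identification and the check that the cone vertex still goes to $x$, which the paper leaves implicit.
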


\begin{proof}

Let $n\ge0$. Then a lifting problem
\begin{center}

\begin{tikzpicture}[scale=1.5]

\node (Pnb) at (0,1) {$\partial\Delta^n$};
\node (Pn) at (1,1) {$K_{/x}$};
\node (Gnb) at (0,0) {$\Delta^n$};
\node (Gn) at (1,0) {$K'_{/p(x)}$};
\path[->,font=\scriptsize,>=angle 90]
(Pnb) edge (Pn)
(Pnb) edge (Gnb)
(Gnb) edge (Gn)
(Pn) edge node [right] {$p_{/x}$} (Gn);

\end{tikzpicture}

\end{center}
corresponds to a lifting problem
\begin{center}

\begin{tikzpicture}[scale=1.5]

\node (Pnb) at (0,1) {$\Lambda^{n+1}_{n+1}$};
\node (Pn) at (1,1) {$K$};
\node (Gnb) at (0,0) {$\Delta^{n+1}$};
\node (Gn) at (1,0) {$K',$};
\path[->,font=\scriptsize,>=angle 90]
(Pnb) edge (Pn)
(Pnb) edge (Gnb)
(Gnb) edge (Gn)
(Pn) edge node [right] {$p$}(Gn);

\end{tikzpicture}

\end{center}
which has a solution, since $p$ is a right fibration.
\end{proof}

\begin{thm}\cite{lurie2009higher}*{Theorem 6.1.6.8}

Let $\mathscr X$ be a presentable $\infty$-category. Then $\mathscr X$ is an $\infty$-topos if and only if the following conditions are satisfied.

\begin{enumerate}

\item Colimits in $\mathscr X$ are universal.

\item For all sufficiently large cardinals $\kappa$, there exists a universal relatively $\kappa$-compact morphism in $\mathscr X$.

\end{enumerate}

\end{thm}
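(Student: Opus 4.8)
The plan is to prove both implications separately; the forward one is essentially formal once descent is granted, while the backward one carries the real content.

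\emph{Forward implication.} Suppose $\mathscr X$ is an $\infty$-topos, say an accessible left exact localization of a presheaf $\infty$-category $\PSh(\mathscr C)$. In $\PSh(\mathscr C)$ colimits are computed objectwise, hence universal, and universality is preserved by left exact localization; this gives~(1). For~(2), fix a regular cardinal $\kappa$ large enough that $\mathscr X$ is $\kappa$-accessible and the class $S_\kappa$ of relatively $\kappa$-compact morphisms of $\mathscr X$ is stable under pullback and closed under $\kappa$-small colimits in the arrow category (\cite{lurie2009higher}*{Proposition 6.1.6.7}). By the Remark above, the codomain fibration restricted to $\Cart^{S_\kappa}\mathscr X$ is a right fibration, classified by the functor $F\colon\mathscr X^\op\to\mathscr S$ sending $X$ to the core of the full subcategory of $\mathscr X_{/X}$ spanned by the relatively $\kappa$-compact morphisms (this is essentially small, since relative $\kappa$-compactness bounds cardinality). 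The key point is that $F$ preserves small limits: this is a form of descent in the $\infty$-topos $\mathscr X$, using that $(-)^\simeq$ preserves limits and that $S_\kappa$ is stable under pullback and --- for $\kappa$ chosen large enough --- under the colimits that occur. Since $F$ is moreover accessible, the representability criterion for presentable $\infty$-categories (\cite{lurie2009higher}*{Proposition 5.5.2.2}) makes $F$ representable, by an object $\mathcal U_\kappa\in\mathscr X$. The universal element of $F(\mathcal U_\kappa)$ is a relatively $\kappa$-compact morphism $\widetilde{\mathcal U}_\kappa\to\mathcal U_\kappa$, and unwinding representability says precisely that this morphism is a final object of $\Cart^{S_\kappa}\mathscr X$, i.e.\ a universal relatively $\kappa$-compact morphism.

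\emph{Backward implication.} Now suppose $\mathscr X$ is presentable, colimits in $\mathscr X$ are universal, and for all sufficiently large $\kappa$ there is a universal relatively $\kappa$-compact morphism $\widetilde{\mathcal U}_\kappa\xrightarrow{q_\kappa}\mathcal U_\kappa$. I would deduce that $\mathscr X$ satisfies descent and then invoke the Giraud-type characterization of $\infty$-topoi (\cite{lurie2009higher}*{Theorem 6.1.0.6}). Let $p\colon K\to\mathscr X$ be a small diagram with colimit $X=\colim p$. Choosing $\kappa$ large enough that $\mathcal U_\kappa$ ``sees'' all objects and morphisms occurring in the slices $\mathscr X_{/p(k)}$ and $\mathscr X_{/X}$, the universal property of $q_\kappa$ gives natural equivalences between $\Map_{\mathscr X}(Y,\mathcal U_\kappa)$ and the core of the relatively $\kappa$-compact part of $\mathscr X_{/Y}$; since $\Map_{\mathscr X}(-,\mathcal U_\kappa)$ carries $X=\colim p$ to a limit, these cores satisfy descent. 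Two further steps remain: first, promote this to an equivalence of full slice $\infty$-categories $\mathscr X_{/X}\xrightarrow{\ \sim\ }\lim_{k\in K^\op}\mathscr X_{/p(k)}$, where universality of colimits furnishes the pullback functors $\mathscr X_{/X}\to\mathscr X_{/p(k)}$ and the comparison functor, which one checks is fully faithful and essentially surjective by reducing mapping spaces and objects to the $\kappa$-small situation just treated; second, note that this holds for all sufficiently large $\kappa$, so that --- since every object of a slice lies in $S_\kappa$ for $\kappa$ large enough --- no smallness restriction survives. With descent established, presentability and universality of colimits yield the remaining Giraud axioms (disjointness of coproducts, effectivity of groupoid objects), so $\mathscr X$ is an $\infty$-topos.

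\emph{The main obstacle.} The crux is the backward implication, and within it the bootstrap from the statement about cores of $\kappa$-small slices to genuine descent for all slices: one must control the dependence on $\kappa$, verify that relative $\kappa$-compactness is both preserved and reflected by the pullback functors coming from universality of colimits, and confirm that the comparison functor $\mathscr X_{/X}\to\lim_{k\in K^\op}\mathscr X_{/p(k)}$ is an equivalence of $\infty$-categories rather than merely an equivalence on maximal sub-$\infty$-groupoids. The forward direction is comparatively routine once descent in $\infty$-topoi is granted; its only genuine points are the accessibility of $F$ and the translation of ``representing object'' into ``final object of $\Cart^{S_\kappa}\mathscr X$.''
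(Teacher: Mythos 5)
This theorem is not proved in the paper at all --- it is quoted verbatim from \cite{lurie2009higher}*{Theorem 6.1.6.8} --- so the only meaningful comparison is with Lurie's proof, and your outline does follow its architecture: forward direction via locality of the class of relatively $\kappa$-compact morphisms plus representability of the core-valued functor, backward direction via a descent criterion. The forward half is essentially fine as a sketch, with the caveat that the sentence ``$F$ preserves small limits: this is a form of descent'' is precisely where all the work lives: it is the conjunction of the descent theorem for $\infty$-topoi and the statement that relative $\kappa$-compactness is local on the target for sufficiently large $\kappa$ (HTT Lemma 6.1.6.7, used there together with Proposition 6.1.6.3); citing 6.1.6.7 only for pullback- and colimit-stability does not yet give limit preservation of $F$.

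The genuine gap is in the backward direction, at exactly the point you flag as ``two further steps remain.'' Promoting the core-level gluing to an equivalence of full slice $\infty$-categories $\mathscr X_{/X}\simeq\lim_k\mathscr X_{/p(k)}$ is strictly stronger than what you need, and the proposed verification (``reducing mapping spaces and objects to the $\kappa$-small situation just treated'') is not an argument: full faithfulness of the comparison functor requires gluing of mapping spaces in slices, which is not an instance of the statement about cores of $\Cart^{S_\kappa}\mathscr X$ you have established, and no mechanism is given for it. The standard repair, and what Lurie actually does, is to verify the descent criterion of HTT Theorem 6.1.3.9 directly at the level of Cartesian transformations: given colimit diagrams $\bar p,\bar q\colon K^{\vartriangleright}\to\mathscr X$ and a transformation $\alpha$ whose restriction to $K$ is Cartesian, choose $\kappa$ large enough that all components of $\alpha|K$ are relatively $\kappa$-compact, use the right-fibration/final-object formulation of the classifier (not just objectwise representability, which by itself does not produce a \emph{coherent} diagram of classifying maps) to classify $\alpha|K$ by a diagram $K\to\mathscr X_{/\mathcal U_\kappa}$, extend over the cone point since $\Map(-,\mathcal U_\kappa)$ sends the colimit to a limit, and then use universality of colimits to identify the pullback of the universal morphism over $\bar q(\infty)$ with $\bar p(\infty)$, so that $\alpha$ is Cartesian. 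This only uses the groupoid-level universal property you already have, and it avoids the unproved bootstrap to slice-category descent entirely; as written, your proposal does not close that step.
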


\begin{notn}

Let $\kappa$ be a regular cardinal, and let $S$ denote the collection of relatively $\kappa$-compact morphisms. Then we let $\Cart^\kappa{\mathscr X}=\Cart^S{\mathscr X}$. We denote a universal relatively $\kappa$-compact morphism by $\widehat\Obj_\kappa\xrightarrow\pi\Obj_\kappa$.

\end{notn}

\subsection{Principal bundles and fiber bundles}

In this subsection, let us fix an $\infty$-topos $\mathscr X$.

\begin{defn}

Let $G\in\Grp\mathscr X$ be a group object and $P\in\mathscr X$ an object. Then a \emph{(right) $G$-action on $P$} is a morphism $P_\bullet\to G$ in $\Grpd\mathscr X$ such that for all morphisms $[m]\to[n]$ in $\Delta$, the square
\begin{center}

\begin{tikzpicture}[scale=1.5]

\node (Pnb) at (0,1) {$P_n$};
\node (Pn) at (1,1) {$P_m$};
\node (Gnb) at (0,0) {$G_n$};
\node (Gn) at (1,0) {$G_m$};
\path[->,font=\scriptsize,>=angle 90]
(Pnb) edge (Pn)
(Pnb) edge (Gnb)
(Gnb) edge (Gn)
(Pn) edge (Gn);

\end{tikzpicture}

\end{center}
is Cartesian, and we have $P_0\simeq P$. Let $\mathop{\text{Action-$G$}}\subseteq\Grpd(\mathscr X)_{/G}$ denote the full subcategory of right $G$-actions.

\end{defn}

\begin{rem}

Let $G$ be a discrete group acting on a set $P$ from the right via the action map $P\times G\xrightarrow\rho P$. Then the square
\begin{center}

\begin{tikzpicture}[scale=1.5]

\node (Pnb) at (0,1) {$P\times G$};
\node (Pn) at (1,1) {$P$};
\node (Gnb) at (0,0) {$G$};
\node (Gn) at (1,0) {$\ast$};
\path[->,font=\scriptsize,>=angle 90]
(Pnb) edge node [above] {$\rho$} (Pn)
(Pnb) edge (Gnb)
(Gnb) edge (Gn)
(Pn) edge (Gn);

\end{tikzpicture}

\end{center}
is Cartesian.

\end{rem}

\begin{rem}

\cite{nikolaus2014principal}*{Proposition 3.15} shows that it is enough to assume less, and the proof needs even less than assumed.

\end{rem}

\begin{thm}\cite{nikolaus2014principal}*{Theorem 3.19}

Let $G\in\Grp\mathscr X$ be a group object, and $X\in\mathscr X$ be an object. Then principal bundles are classified by morphisms $X\to\mathbf BG$. That is, we have an equivalence
$$
(\mathop{\text{Action-$G$}})\times_\mathscr X\{X\}\simeq\Map(X,\mathbf BG)
$$
where the map $\mathop{\text{Action-$G$}}\to\mathscr X$ is given by taking a map of groupoids $P\to G$ to $\varinjlim P$. The equivalence is given by taking geometrical realizations and restricting to $-1\in\Delta_+$.

\end{thm}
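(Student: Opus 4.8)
The plan is to reduce the statement to descent along effective epimorphisms in the $\infty$-topos $\mathscr X$, by way of the theory of groupoid objects. Recall that, viewing $G\in\Grp\mathscr X$ as the groupoid object $BG_\bullet$ with $(BG)_0\simeq\ast$ and $(BG)_1\simeq G$, one has $\mathbf BG\simeq\varinjlim BG_\bullet$, the augmentation $\ast\to\mathbf BG$ is an effective epimorphism, and its \v{C}ech nerve recovers $BG_\bullet$; this is Lurie's delooping theorem \cite{lurie2009higher}. More generally, since every groupoid object of an $\infty$-topos is effective, the functor $\Grpd\mathscr X\to\Arr\mathscr X$ carrying $Q_\bullet$ to the edge $Q_0\to\varinjlim Q_\bullet$ is an equivalence onto the full subcategory of effective epimorphisms, with inverse the \v{C}ech nerve functor. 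I would use this to identify the $\infty$-category $\mathop{\text{Action-$G$}}$ with the slice $\mathscr X_{/\mathbf BG}$.

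The key step is to match, via descent along the effective epimorphism $\ast\to\mathbf BG$, the Cartesian transformations $P_\bullet\to BG_\bullet$ of groupoid objects with the objects of $\mathscr X_{/\mathbf BG}$. In one direction, base change of a \v{C}ech nerve along a map $Y\to\mathbf BG$ is levelwise Cartesian over $BG_\bullet$, since $\check C(Y\times_{\mathbf BG}\ast\to Y)_n\simeq Y\times_{\mathbf BG}(BG)_n$ compatibly with the simplicial operators, so an object $Y\to\mathbf BG$ yields a $G$-action. In the other, given a Cartesian transformation $P_\bullet\to BG_\bullet$, the simplicial object $P_\bullet$, being the source of a Cartesian transformation to a groupoid object, is itself a groupoid object, hence effective, so $P_\bullet\simeq\check C(P_0\to\varinjlim P_\bullet)$; descent along $\ast\to\mathbf BG$ then implies $P_0\simeq(\varinjlim P_\bullet)\times_{\mathbf BG}\ast$, so $P_\bullet$ is recovered from $\varinjlim P_\bullet\to\mathbf BG$. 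These assignments are mutually inverse and, crucially, assemble into an equivalence $\mathop{\text{Action-$G$}}\simeq\mathscr X_{/\mathbf BG}$ compatible with the functors to $\mathscr X$: on the left the functor $P_\bullet\mapsto\varinjlim P_\bullet$ of the statement, on the right the forgetful functor $(Y\to\mathbf BG)\mapsto Y$. Unwinding the \v{C}ech nerve equivalence also shows that the map classifying an action is obtained exactly by extending $P_\bullet\to BG_\bullet$ to a map of colimit cocones over $\Delta_+^\op$ and restricting to $[-1]\in\Delta_+$, as asserted.

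With this in hand I would take fibers over $X$. The forgetful functor $\mathscr X_{/\mathbf BG}\to\mathscr X$ is a right fibration classified by $\Map(-,\mathbf BG)\colon\mathscr X^\op\to\mathscr S$, so its fiber over $X$ is $\Map(X,\mathbf BG)$; since the equivalence above respects the functors to $\mathscr X$, pulling back along $\{X\}\hookrightarrow\mathscr X$ gives
$$
\mathop{\text{Action-$G$}}\times_{\mathscr X}\{X\}\ \simeq\ \mathscr X_{/\mathbf BG}\times_{\mathscr X}\{X\}\ \simeq\ \Map(X,\mathbf BG),
$$
which is the desired classification.

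The step I expect to be the main obstacle is the middle one: promoting the object-level dictionary between $G$-actions and maps to $\mathbf BG$ to an equivalence of $\infty$-categories, i.e.\ producing all the higher coherences. Concretely, one must realize the \v{C}ech nerve construction and the formation of base changes as functors between suitable fibered $\infty$-categories over $\mathscr X$, check that the condition ``all comparison squares are Cartesian'' defining $\mathop{\text{Action-$G$}}$ corresponds after straightening to the pullback-square condition, and verify compatibility with the realization functors. This is the bookkeeping that \cite{nikolaus2014principal} carries out by working systematically with $\Grpd\mathscr X$, $\Arr\mathscr X$ and the fibrations between them.
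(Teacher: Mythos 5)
The paper gives no proof of this statement---it is imported verbatim from \cite{nikolaus2014principal}*{Theorem 3.19}---and your outline reproduces essentially the argument of that source (Theorems 3.17 and 3.19 there): identify $G$-actions with objects of $\mathscr X_{/\mathbf BG}$ by observing that a Cartesian transformation $P_\bullet\to G_\bullet$ has groupoid source, that groupoid objects in an $\infty$-topos are effective, and that descent along the effective epimorphism $\ast\to\mathbf BG$ makes quotient and \v{C}ech nerve mutually inverse, then take fibers of the right fibration $\mathscr X_{/\mathbf BG}\to\mathscr X$ over $X$ to get $\Map(X,\mathbf BG)$. So your proposal is correct in substance and follows the same route as the cited reference, the remaining work being exactly the fibrational bookkeeping you identify at the end.
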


\begin{defn}

Let $V,X\in\mathscr X$ be objects. Then a \emph{$V$-fiber bundle over $X$} is a map $E\xrightarrow pX$ such that there exists an effective epimorphism $U\twoheadrightarrow X$, and a Cartesian square
\begin{center}

\begin{tikzpicture}[scale=1.5]

\node (A) at (0,1) {$U\times V$};
\node (B) at (0,0) {$U$};
\node (X) at (1,1) {$E$};
\node (Y) at (1,0) {$X$.};
\node (c) at (0.7,0.3) {$\lrcorner$};
\path[->,font=\scriptsize,>=angle 90]
(A) edge (X)
(A) edge node [right] {$\pr_U$} (B)
(X) edge node [right] {$p$} (Y)
(B) edge [->>] (Y);

\end{tikzpicture}

\end{center}
The \emph{space of $V$-fiber bundles} is the full subcategory $\Bun_V\subseteq\Cart_{\mathscr X}$ of $V$-fiber bundles.

\end{defn}

\begin{prop}

Let $V\in\mathscr X$ be a small object, ie. a $\kappa$-compact object for some regular cardinal $\kappa$. Then every $V$-fiber bundle is relatively $\kappa$-compact.

\end{prop}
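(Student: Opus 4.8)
The plan is to reduce to the locally trivial model $U\times V\to U$ and then exhibit the total space of the bundle as a $\kappa$-small colimit of $\kappa$-compact objects, via \v{C}ech descent. First I would reduce to the case of a $\kappa$-compact base: a morphism $p\colon E\to X$ is relatively $\kappa$-compact precisely when $E\times_X S'$ is $\kappa$-compact for every map $S'\to X$ with $S'$ a $\kappa$-compact object \cite{lurie2009higher}*{\S6.1.6}, and pulling back the trivializing effective epimorphism $U\twoheadrightarrow X$ with its Cartesian square along $S'\to X$ shows $E\times_X S'\to S'$ is again a $V$-fiber bundle; so it is enough to prove that a $V$-fiber bundle over a $\kappa$-compact base has $\kappa$-compact total space. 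Enlarging $\kappa$ to a larger regular cardinal if necessary (harmless here, since $V$ stays $\kappa$-compact), I also assume $\kappa$ is large enough that $\mathscr X$ is $\kappa$-accessible and its $\kappa$-compact objects are closed under finite limits. So fix a $V$-fiber bundle $E\xrightarrow p X$ with $X$ a $\kappa$-compact object, an effective epimorphism $U\twoheadrightarrow X$, and a Cartesian square exhibiting $E\times_X U\simeq U\times V$ over $U$.

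The heart of the argument is to refine $U$ to a $\kappa$-compact cover. I claim there is a $\kappa$-compact object $P$ with a map $P\to U$ such that the composite $P\to U\twoheadrightarrow X$ is still an effective epimorphism. Write $U\simeq\colim_{i\in I}U_i$ as a $\kappa$-filtered colimit of $\kappa$-compact objects, and let $X_i\hookrightarrow X$ be the image (the $(-1)$-truncation) of $U_i\to U\to X$; these form a $\kappa$-filtered system of subobjects of $X$. Since filtered colimits commute with finite limits in an $\infty$-topos, the formation of images commutes with the colimit $U\simeq\colim_i U_i$, so $\colim_i X_i\simeq\im(U\to X)\simeq X$, the last equivalence because $U\to X$ is an effective epimorphism. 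As $X$ is $\kappa$-compact and $I$ is $\kappa$-filtered, the identity of $X$ factors through some $X_{i_0}\hookrightarrow X$; a monomorphism admitting a section is an equivalence, so $P := U_{i_0}$ works.

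Finally I would run \v{C}ech descent along $P\twoheadrightarrow X$. Writing $P_\bullet$ for its \v{C}ech nerve, the fact that $P\twoheadrightarrow X$ is an effective epimorphism gives $X\simeq\colim_{[n]\in\Delta^\op}P_n$, and by universality of colimits $E\simeq E\times_X\colim_n P_n\simeq\colim_{[n]\in\Delta^\op}(E\times_X P_n)$. Each $P_n\simeq P^{\times_X(n+1)}$ is a finite limit of the $\kappa$-compact objects $P$ and $X$, hence $\kappa$-compact; and since $P\to X$ factors through $U$ (so too does each $P_n\to X$), the bundle is trivial over every $P_n$, so $E\times_X P_n\simeq P_n\times V$ is a product of two $\kappa$-compact objects and therefore $\kappa$-compact. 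Thus $E$ is a colimit, indexed by $\Delta^\op$, of $\kappa$-compact objects; since $\Delta^\op$ has only countably many nondegenerate simplices it is $\kappa$-small, and $\kappa$-compact objects are closed under $\kappa$-small colimits, so $E$ is $\kappa$-compact, as desired.

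The step I expect to be the real obstacle is the construction of the $\kappa$-compact refinement $P$: this is where accessibility of $\mathscr X$ and left exactness of filtered colimits genuinely enter, and one has to check with some care that the images $X_i$ really do form a $\kappa$-filtered system of subobjects of $X$ whose union is all of $X$. The remaining steps are routine manipulations of Cartesian squares and colimits. (Alternatively, granting the universal relatively $\kappa$-compact morphism $\widehat\Obj_\kappa\to\Obj_\kappa$, one can note that each trivial bundle $E\times_X U_n\simeq U_n\times V$ is relatively $\kappa$-compact, that the induced classifying maps $U_n\to\Obj_\kappa$ are compatible, and that — every object of an $\infty$-topos satisfying descent — they glue to a map $X\to\Obj_\kappa$ classifying $E\to X$ by descent for $\mathscr X_{/X}$; then $E\to X$ is a pullback of $\widehat\Obj_\kappa\to\Obj_\kappa$, hence relatively $\kappa$-compact.)
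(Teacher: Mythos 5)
Your proof is essentially correct, but it takes a genuinely different route from the paper, which disposes of this proposition in one line by quoting \cite{lurie2009higher}*{Lemma 6.1.6.5}: relative $\kappa$-compactness is local, so it can be checked after pulling back along the trivializing effective epimorphism $U\twoheadrightarrow X$, where the bundle becomes the projection $U\times V\to U$. What you do instead is reprove the relevant locality by hand in this special case: you reduce to a $\kappa$-compact base using the definition of relative $\kappa$-compactness, replace the trivializing cover $U$ by a $\kappa$-compact cover $P$ (writing $U$ as a $\kappa$-filtered colimit of $\kappa$-compact objects, using left exactness of filtered colimits to see that the images of the $U_i$ exhaust $X$, then invoking $\kappa$-compactness of $X$), and finally realize the total space as the geometric realization of the \v{C}ech pieces $P_n\times V$, a $\kappa$-small colimit of $\kappa$-compact objects; these steps are sound, and your identification of the $\kappa$-compact refinement as the delicate point is accurate. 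The main caveat is your opening move of ``enlarging $\kappa$'': relative $\kappa$-compactness is not monotone in $\kappa$, so what you actually prove is the proposition for all sufficiently large regular cardinals (those for which $\mathscr X$ is $\kappa$-accessible and the $\kappa$-compact objects are closed under finite limits), not for the originally given $\kappa$; the statement you prove should say so rather than silently replacing $\kappa$. For the paper's purposes this is harmless, since the classifier $\Obj_\kappa$ exists only for sufficiently large $\kappa$ anyway, and some such closure hypothesis is also what makes the trivial bundle $U\times V\to U$ relatively $\kappa$-compact in the first place. Your concluding parenthetical (gluing classifying maps $U_n\to\Obj_\kappa$ by descent) should not be advertised as an easy alternative: the coherence of that gluing is exactly the content of the locality statement, i.e.\ of \cite{nikolaus2014principal}*{Proposition 4.10} and \cite{lurie2009higher}*{Lemma 6.1.6.5}. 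In exchange for the extra length, your argument is self-contained and does not presuppose the classifier or Lurie's lemma; the paper's citation buys brevity and keeps the statement in the exact form (and generality in $\kappa$) provided by Lurie.
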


\begin{proof}

This is a special case of \cite{lurie2009higher}*{Lemma 6.1.6.5}.

\end{proof}

\begin{cor}

We have $\Bun_V\subseteq\Cart_{\mathscr X}^\kappa$.

\end{cor}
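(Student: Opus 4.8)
The plan is to treat this as an essentially immediate consequence of the preceding Proposition, the only content being the bookkeeping about which ambient category the inclusion lives in. Recall that by definition $\Cart^\kappa_{\mathscr X}=\Cart^S\mathscr X$, where $S$ is the collection of relatively $\kappa$-compact morphisms, and that $\Cart^S\mathscr X=\Cart\mathscr X\cap\Arr^S\mathscr X$ is the $2$-full subcategory of $\Cart\mathscr X$ whose objects are the relatively $\kappa$-compact morphisms. So to obtain $\Bun_V\subseteq\Cart^\kappa_{\mathscr X}$ I would check separately that objects of $\Bun_V$ are objects of $\Cart^\kappa_{\mathscr X}$ and that morphisms of $\Bun_V$ are morphisms of $\Cart^\kappa_{\mathscr X}$.

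For objects: a $V$-fiber bundle is, by the preceding Proposition, relatively $\kappa$-compact, hence is an object of $\Cart^\kappa_{\mathscr X}$. For morphisms: both $\Bun_V$ and $\Cart^\kappa_{\mathscr X}$ are by construction subcategories of $\Cart_{\mathscr X}$, so every morphism in $\Bun_V$ is already a pullback square in $\mathscr X$; since its source and target are $V$-fiber bundles and therefore relatively $\kappa$-compact, it is in particular a pullback square between relatively $\kappa$-compact morphisms, i.e.\ a morphism of $\Cart^\kappa_{\mathscr X}$. Combining the two observations gives the claimed inclusion of subcategories of $\Cart_{\mathscr X}$.

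There is no real obstacle here. The only subtlety worth flagging is that the comparison must be performed inside $\Cart_{\mathscr X}$ rather than $\Arr_{\mathscr X}$: it is precisely because all morphisms on both sides are already pullback squares that no extra condition on morphisms needs to be verified, and the inclusion follows purely from the statement about objects supplied by the preceding Proposition.
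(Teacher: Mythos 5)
Your proof is correct and matches the paper's (implicit) argument: the corollary is an immediate consequence of the preceding Proposition, since both $\Bun_V$ and $\Cart^\kappa_{\mathscr X}$ are full subcategories of $\Cart_{\mathscr X}$, so the inclusion reduces to the statement on objects. The bookkeeping about morphisms being pullback squares on both sides is exactly the point the paper leaves tacit.
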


\begin{defn}\label{defn:BAut V}

Let $V\in\mathscr X$ be a $\kappa$-compact object. Then its \emph{inner automorphism group} $\BAut V$ with an action $\rho_V:\BAut V\mathop{\text{\rotatebox[origin=c]{-90}{$\circlearrowleft$}}}V$ is defined via the following diagram of Cartesian squares.
\begin{center}

\begin{tikzpicture}[xscale=2,yscale=1.5]

\node (V) at (0,1) {$V$};
\node (V/) at (1,1) {$V\sslash\BAut V$};
\node (HObj) at (2,1) {$\widehat\Obj_\kappa$};
\node (*) at (0,0) {$\ast$};
\node (BAut) at (1,0) {$\mathbf B\BAut V$};
\node (Obj) at (2,0) {$\Obj_\kappa$};
\node at (0.7,0.3) {$\lrcorner$};
\node at (1.7,0.3) {$\lrcorner$};
\path[->,font=\scriptsize,>=angle 90]
(V) edge (V/)
(V) edge (*)
(V/) edge (HObj)
(V/) edge node [right] {$q$} (BAut)
(HObj) edge node [right] {$\pi$} (Obj)
(*) edge [->>] (BAut)
(BAut) edge [right hook->] (Obj)
(*) edge [bend right] node [below] {$\mathbf c_V$} (Obj);

\end{tikzpicture}

\end{center}

\end{defn}

\begin{prop}\cite{nikolaus2014principal}*{Proposition 4.10}

Let $E\xrightarrow pX$ be a $V$-fiber bundle. Then its classifying map $X\xrightarrow{\mathbf c_p}\Obj_\kappa$ factors through the monomorphism $\mathbf B\BAut V\hookrightarrow\Obj_\kappa$, resulting in the following pasting diagram.
\begin{center}

\begin{tikzpicture}[xscale=2,yscale=1.5]

\node (V) at (0,1) {$E$};
\node (V/) at (1,1) {$V\sslash\BAut V$};
\node (HObj) at (2,1) {$\widehat\Obj_\kappa$};
\node (*) at (0,0) {$X$};
\node (BAut) at (1,0) {$\mathbf B\BAut V$};
\node (Obj) at (2,0) {$\Obj_\kappa$};
\node at (0.7,0.3) {$\lrcorner$};
\node at (1.7,0.3) {$\lrcorner$};
\path[->,font=\scriptsize,>=angle 90]
(V) edge (V/)
(V) edge node [left] {$p$} (*)
(V/) edge (HObj)
(V/) edge node [right] {$q$} (BAut)
(HObj) edge node [right] {$\pi$} (Obj)
(*) edge (BAut)
(BAut) edge [right hook->] (Obj)
(*) edge [bend right] node [below] {$\mathbf c_p$} (Obj);

\end{tikzpicture}

\end{center}

\end{prop}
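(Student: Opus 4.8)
The plan is to reduce to the trivial bundle using local triviality, and then descend along the trivializing cover using the orthogonality in $\mathscr X$ between effective epimorphisms and monomorphisms.

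First recall from Definition~\ref{defn:BAut V} that the classifying map $\mathbf c_V\colon\ast\to\Obj_\kappa$ of the trivial bundle $V\to\ast$ factors as $\ast\twoheadrightarrow\mathbf B\BAut V\xrightarrow{\,j\,}\Obj_\kappa$ with $j$ a monomorphism: by construction $V\sslash\BAut V\simeq\mathbf B\BAut V\times_{\Obj_\kappa}\widehat\Obj_\kappa$, and the composite of the two right-hand Cartesian squares there exhibits $\mathbf c_V$ as the classifying map of $V\to\ast$. Next, as in the definition of a $V$-fiber bundle, choose an effective epimorphism $e\colon U\twoheadrightarrow X$ and a Cartesian square identifying $e^*E\to U$ with $\pr_U\colon U\times V\to U$. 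Since $\pr_U$ is the pullback of $V\to\ast$ along the terminal map $t\colon U\to\ast$, and since a classifying map is characterized up to equivalence by the property that $\pi$ pulls back along it to the given morphism (this is the universal property of $\pi$ recorded above, i.e.\ finality in $\Cart^\kappa\mathscr X$ together with the zigzag of trivial fibrations), pasting pullback squares shows that $\mathbf c_p\circ e$ and $\mathbf c_V\circ t$ are both classifying maps of $\pr_U$, hence equivalent. In particular $\mathbf c_p\circ e$ factors through $\mathbf B\BAut V$.

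We thus obtain a commuting square with the effective epimorphism $e$ on the left, the monomorphism $j$ on the right, $\mathbf c_p$ along the bottom, and the factorization of $\mathbf c_p\circ e$ along the top. Since in an $\infty$-topos the effective epimorphisms are precisely the $(-1)$-connective morphisms, the monomorphisms are precisely the $(-1)$-truncated ones, and these two classes form an orthogonal factorization system (see \cite{lurie2009higher}), there is an essentially unique diagonal filler $X\to\mathbf B\BAut V$ whose composite with $j$ is $\mathbf c_p$. Pulling $q\colon V\sslash\BAut V\to\mathbf B\BAut V$ back along this filler and pasting with the right-hand square of Definition~\ref{defn:BAut V} yields a Cartesian rectangle over $\mathbf c_p$ whose total space is the pullback of $\pi$ along $\mathbf c_p$, namely $E$; this is exactly the asserted pasting diagram.

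I expect the one delicate point to be the identification $\mathbf c_p\circ e\simeq\mathbf c_V\circ t$: it requires keeping track of the two ways of writing $\pr_U\colon U\times V\to U$ as a pullback of $\pi$ — once factoring through $X$ and $\mathbf c_p$, once through $\ast$ and $\mathbf c_V$ — and invoking the essential uniqueness of classifying maps together with the pasting law for pullback squares. Given that, the remainder of the argument is formal, resting only on the $(-1)$-connective/$(-1)$-truncated factorization system.
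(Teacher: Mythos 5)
Your argument is correct. Note that the paper itself gives no proof of this proposition --- it is quoted with a citation to Nikolaus--Schreiber--Stevenson, Proposition 4.10 --- and your proof is essentially the standard one found there: by Definition \ref{defn:BAut V}, $\mathbf B\BAut V\hookrightarrow\Obj_\kappa$ is the monomorphism part of the epi--mono factorization of $\mathbf c_V$; local triviality plus the essential uniqueness of classifying maps (finality of $\pi$ in $\Cart^\kappa\mathscr X$) gives $\mathbf c_p\circ e\simeq\mathbf c_V\circ t$, so $\mathbf c_p\circ e$ factors through $\mathbf B\BAut V$; and orthogonality of effective epimorphisms against $(-1)$-truncated maps produces the essentially unique factorization of $\mathbf c_p$ itself, after which the pasting law identifies the pullback of $q$ with $p$. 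The one ``delicate point'' you flag is handled exactly as you suggest. A cosmetic remark: in Lurie's indexing the effective epimorphisms are the $0$-connective (equivalently $(-1)$-connected) morphisms rather than the ``$(-1)$-connective'' ones, but this is purely a matter of convention and does not affect the argument.
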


\begin{thm}\label{thm:classifying fiber bundles}

We have a zigzag of trivial fibrations
$$
\Bun_V\leftarrow(\Bun_V)_{/q}\to\mathscr X_{/\mathbf B\BAut V},
$$
that is $q$ is a universal $V$-fiber bundle.

\end{thm}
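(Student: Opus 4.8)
The plan is to combine the general classification machinery for $S$-classifying maps with the two preceding propositions. Recall that Definition 1.1.5 gives, for any $\pi$ classifying a class $S$, a zigzag of trivial fibrations $\Cart^S{\mathscr X}\leftarrow(\Cart^S{\mathscr X})_{/\pi}\xrightarrow{\cod_{/\pi}}\mathscr X_{/Y}$, where the left map is a trivial fibration by definition of a final object and the right map is a trivial fibration by Lemma \ref{lem:overcat of right fib}. Here we take $\pi$ to be the universal relatively $\kappa$-compact morphism $\widehat\Obj_\kappa\xrightarrow\pi\Obj_\kappa$, so $S$ is the class of relatively $\kappa$-compact morphisms and $\Cart^S{\mathscr X}=\Cart^\kappa_{\mathscr X}$. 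This gives us the zigzag $\Cart^\kappa_{\mathscr X}\leftarrow(\Cart^\kappa_{\mathscr X})_{/\pi}\to\mathscr X_{/\Obj_\kappa}$.

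First I would cut this zigzag down along the monomorphism $j\colon\mathbf B\BAut V\hookrightarrow\Obj_\kappa$. The point is that, by the Corollary following the Proposition on relative $\kappa$-compactness, $\Bun_V\subseteq\Cart^\kappa_{\mathscr X}$ is a full subcategory, and by \cite{nikolaus2014principal}*{Proposition 4.10} (the Proposition immediately preceding this theorem) a $V$-fiber bundle is precisely a relatively $\kappa$-compact Cartesian square whose classifying map factors through $j$; moreover by Definition \ref{defn:BAut V} the square defining $q$ is itself obtained by pulling $\pi$ back along $j$, so $q\in\Bun_V$ and $q$ is classified by $j$ itself. Hence under the equivalence $(\Cart^\kappa_{\mathscr X})_{/\pi}\xrightarrow{\sim}\mathscr X_{/\Obj_\kappa}$, the full subcategory $(\Bun_V)_{/q}$ on the left corresponds to the full subcategory $\mathscr X_{/\mathbf B\BAut V}\subseteq\mathscr X_{/\Obj_\kappa}$ on objects $W\to\Obj_\kappa$ that factor through $j$; since $j$ is a monomorphism, the slice $\mathscr X_{/\mathbf B\BAut V}$ is equivalent to exactly that full subcategory, and the induced map $\mathscr X_{/\mathbf B\BAut V}\to\mathscr X_{/\Obj_\kappa}$ is fully faithful with essential image this subcategory.

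Next I would run the trivial-fibration half of the argument at the level of these slices. Restricting a trivial fibration to a full subcategory of the base, together with the preimage, is still a trivial fibration (this is the standard fact that trivial fibrations are stable under base change, applied to the inclusion of a full subcategory, which is a monomorphism of simplicial sets); so the restriction $(\Bun_V)_{/q}\to\Bun_V$ of the left-hand trivial fibration is again a trivial fibration, and likewise the restriction $(\Bun_V)_{/q}\to\mathscr X_{/\mathbf B\BAut V}$ of the composite $(\Bun_V)_{/q}\to(\Cart^\kappa_{\mathscr X})_{/\pi}\xrightarrow{\sim}\mathscr X_{/\Obj_\kappa}$ lands in the full subcategory $\mathscr X_{/\mathbf B\BAut V}$ and is a trivial fibration onto it. That produces exactly the claimed zigzag $\Bun_V\leftarrow(\Bun_V)_{/q}\to\mathscr X_{/\mathbf B\BAut V}$, and from it the final clause: $q$ is a final object of $\Bun_V$, i.e.\ a universal $V$-fiber bundle.

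The main obstacle is the bookkeeping in the second paragraph: one must check carefully that the three full subcategories ($\Bun_V\subseteq\Cart^\kappa_{\mathscr X}$, the ``factors through $j$'' subcategory of $\mathscr X_{/\Obj_\kappa}$, and $(\Bun_V)_{/q}\subseteq(\Cart^\kappa_{\mathscr X})_{/\pi}$) correspond to one another under the equivalence $(\Cart^\kappa_{\mathscr X})_{/\pi}\simeq\mathscr X_{/\Obj_\kappa}$, i.e.\ that the equivalence matches ``classifying map factors through $j$'' with ``the structure map of the slice factors through $j$.'' This is where \cite{nikolaus2014principal}*{Proposition 4.10} and the explicit description of the classifying map $\mathbf c_p$ as the bottom composite in the pasting diagram of Definition \ref{defn:BAut V} do the real work: the equivalence sends a Cartesian square $f$ over $\pi$ to its bottom edge $\mathbf c_f$, so ``$f\in\Bun_V$'' is sent to ``$\mathbf c_f$ factors through $j$,'' which is the defining condition of the full subcategory $\mathscr X_{/\mathbf B\BAut V}$. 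Everything else is formal manipulation of trivial fibrations and full subcategories.
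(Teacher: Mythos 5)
Your proposal is correct and follows essentially the same route as the paper: restrict the universal zigzag for $\pi$ along the monomorphism $\mathbf B\BAut V\hookrightarrow\Obj_\kappa$, use \cite{nikolaus2014principal}*{Proposition 4.10} to identify the preimage with $(\Bun_V)_{/q}$, apply Lemma \ref{lem:monomorphism on overcategories} to get fully faithfulness of the relevant slice inclusion, and conclude by pulling back a trivial fibration. The only cosmetic difference is that you apply the monomorphism lemma to $j$ in $\mathscr X$ while the paper applies it to the square $\alpha$ in $\Cart^\kappa_{\mathscr X}$, a translation the paper itself notes is equivalent.
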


\begin{proof}

Let us denote the right-hand Cartesian square of Definition \ref{defn:BAut V} by $\alpha$. Then the map $\mathbf B\BAut V\xrightarrow i\Obj_\kappa$ is a monomorphism if and only if $i\in\mathscr X_{/\Obj_\kappa}$ is $(-1)$-truncated, which in turn is equivalent to $\alpha\in(\Cart_{\mathscr X}^\kappa)_{/\pi}$ being $(-1)$-truncated, that is $\alpha$ is a monomorphism. Therefore, the induced map $(\Cart_{\mathscr X}^\kappa)_{/q}\to(\Cart_{\mathscr X}^\kappa)_{/\pi}$ is fully faithful by Lemma \ref{lem:monomorphism on overcategories}. Thus, the commutative square
\begin{center}

\begin{tikzpicture}[xscale=3,yscale=2]
\node (Bq) at (0,1) {$(\Bun_V)_{/q}$};
\node (Cq) at (1,1) {$(\Cart_{\mathscr X}^\kappa)_{/q}$};
\node (Cpi) at (2,1) {$(\Cart_{\mathscr X}^\kappa)_{/\pi}$};
\node (B) at (0,0) {$\Bun_V$};
\node (C) at (2,0) {$\Cart_{\mathscr X}^\kappa$};
\node at (1.7,0.3) {$\lrcorner$};
\path[->,font=\scriptsize,>=angle 90]
(Bq) edge node [above] {$\cong$} (Cq)
(Bq) edge (B)
(Cq) edge [right hook->] node [above] {$\alpha\circ$} (Cpi)
(Cpi) edge node [right] {$\simeq$} (C)
(B) [right hook->] edge (C);
\end{tikzpicture}

\end{center}
is Cartesian, which proves our claim.

\end{proof}

\begin{lem}\label{lem:monomorphism on overcategories}

Let $X\xrightarrow fY$ be a morphism in an $\infty$-category $\mathscr C$. Suppose that $f$ is a monomorphism. Then the postcomposition map $\mathscr C_{/X}\xrightarrow{f\circ}\mathscr C_{/Y}$ is a monomorphism, ie.~fully faithful.

\end{lem}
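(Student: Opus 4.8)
The plan is to reduce the statement to the characterization of monomorphisms in an $\infty$-category as those morphisms whose diagonal is an equivalence, and then to compute the relevant pullbacks in the slice categories. Recall that $f\colon X\to Y$ is a monomorphism precisely when the diagonal $X\to X\times_Y X$ is an equivalence; equivalently, the mapping space $\Map_{\mathscr C}(Z,X)\to\Map_{\mathscr C}(Z,Y)$ is $(-1)$-truncated (i.e.\ a monomorphism of spaces, so an inclusion of a subset of connected components) for every $Z\in\mathscr C$. I want to show the postcomposition functor $f\circ\colon\mathscr C_{/X}\to\mathscr C_{/Y}$ is fully faithful, i.e.\ that for objects $(u\colon W\to X)$ and $(v\colon Z\to X)$ of $\mathscr C_{/X}$ the map
$$
\Map_{\mathscr C_{/X}}\big((W\xrightarrow u X),(Z\xrightarrow v X)\big)\to\Map_{\mathscr C_{/Y}}\big((W\xrightarrow{fu} Y),(Z\xrightarrow{fv} Y)\big)
$$
is an equivalence.

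The key step is to identify both mapping spaces via the standard fiber-sequence description of mapping spaces in a slice. For a slice $\mathscr C_{/X}$ one has a pullback square of spaces
$$
\Map_{\mathscr C_{/X}}\big((W\xrightarrow u X),(Z\xrightarrow v X)\big)\simeq \Map_{\mathscr C}(W,Z)\times_{\Map_{\mathscr C}(W,X)}\{u\},
$$
the map $\Map_{\mathscr C}(W,Z)\to\Map_{\mathscr C}(W,X)$ being postcomposition with $v$. Applying this twice, the comparison map above is the map on vertical fibers in the diagram
\begin{center}
\begin{tikzpicture}[xscale=3.2,yscale=1.6]
\node (A) at (0,1) {$\Map_{\mathscr C}(W,Z)$};
\node (B) at (1,1) {$\Map_{\mathscr C}(W,Z)$};
\node (C) at (0,0) {$\Map_{\mathscr C}(W,X)$};
\node (D) at (1,0) {$\Map_{\mathscr C}(W,Y)$};
\path[->,font=\scriptsize,>=angle 90]
(A) edge node [above] {$=$} (B)
(A) edge node [left] {$v\circ$} (C)
(B) edge node [right] {$(fv)\circ$} (D)
(C) edge node [below] {$f\circ$} (D);
\end{tikzpicture}
\end{center}
over the point $u\in\Map_{\mathscr C}(W,X)$ and its image $fu\in\Map_{\mathscr C}(W,Y)$. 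Since $f$ is a monomorphism, the bottom arrow $f\circ\colon\Map_{\mathscr C}(W,X)\to\Map_{\mathscr C}(W,Y)$ is $(-1)$-truncated, hence its fibers are empty or contractible; in particular the square is a pullback (a square with top arrow an equivalence and bottom arrow a monomorphism is automatically Cartesian), and therefore the induced map on the fibers over $u$ and $fu$ is an equivalence. This is exactly fully faithfulness of $f\circ$.

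Finally, to conclude that $f\circ$ is itself a monomorphism (not merely fully faithful), note that a fully faithful functor is a monomorphism in $\Cat_\infty$: it is, up to equivalence, the inclusion of a full subcategory, namely the full subcategory of $\mathscr C_{/Y}$ spanned by objects $Z\to Y$ that factor (necessarily essentially uniquely, again because $f$ is mono) through $f$. The main obstacle is purely bookkeeping: pinning down the fiber-sequence description of slice mapping spaces and checking that the square above is Cartesian; once that is in hand the monomorphism hypothesis on $f$ does all the work, and there are no genuine difficulties. Alternatively, one can argue more directly that $\mathscr C_{/X}\to\mathscr C_{/Y}$ is equivalent to the inclusion of the full subcategory of $\mathscr C_{/Y}$ whose objects map to $Y$ through $f$, using that $\mathscr C_{/X}\to\mathscr C_{/Y}\times_{\mathscr C}\mathscr C$ is the base change of the mono $X\to Y$ along the forgetful functor, but the mapping-space computation is the cleanest route.
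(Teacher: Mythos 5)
Your argument is correct, and it takes a genuinely different route from the paper. You verify fully faithfulness directly on mapping spaces: using the fibre--sequence description $\Map_{\mathscr C_{/X}}(u,v)\simeq\fib_u\bigl(\Map_{\mathscr C}(W,Z)\xrightarrow{v\circ}\Map_{\mathscr C}(W,X)\bigr)$ (\cite{lurie2009higher}*{Lemma 5.5.5.12}, which the paper itself invokes elsewhere), you reduce everything to the observation that a commutative square whose top edge is an equivalence and whose bottom edge is $(-1)$-truncated is automatically Cartesian, so the fibre of $v\circ$ over $u$ agrees with the fibre of $(fv)\circ$ over $fu$. The paper argues instead through the adjunction $f_!\dashv f^*$ between postcomposition and pullback: it shows the unit $P\to f^*f_!P$ is an equivalence because it is a section of $(fp)^*f$, which is a pullback of the monomorphism $f$ and hence itself a monomorphism, and a monomorphism admitting a section is an equivalence. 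The trade-off: the paper's proof is shorter but implicitly requires pullbacks along $f$ to exist in $\mathscr C$ (so that $f^*$ is even defined), which is not among the lemma's hypotheses, whereas your mapping-space computation works in an arbitrary $\infty$-category and so proves the lemma exactly as stated; your concluding remark that a fully faithful functor is $(-1)$-truncated in $\Cat_\infty$ is also correct and consistent with the paper's use of ``monomorphism, i.e.\ fully faithful'' as synonyms. The one step you leave implicit is the naturality check that the map induced by $f\circ$ on slice mapping spaces really is the map on fibres in your square; this is standard, but it is where the bookkeeping lives. (Your closing aside about $\mathscr C_{/X}\to\mathscr C_{/Y}\times_{\mathscr C}\mathscr C$ is not well formed as written, but it is not needed for the argument.)
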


\begin{proof}

It will be enough to show that the unit map $\id_{\mathscr C_{/X}}\xrightarrow\eta f^*f_!$ is an equivalence. Consider a bundle $P\xrightarrow pX$. It gives the following diagram.
\begin{center}

\begin{tikzpicture}[scale=1.5]
\node (P) at (0,1) {$P$};
\node (*!) at (1,1) {$f^*f_!P$};
\node (!) at (2,1) {$f_!P$};
\node (X) at (0,0) {$X$};
\node (Y) at (2,0) {$Y$};
\node at (1.7,0.3) {$\lrcorner$};
\path[->,font=\scriptsize,>=angle 90]
(P) edge node [above] {$\eta_P$} (*!)
(P) edge node [left] {$p$} (X)
(*!) edge node [above] {$(fp)^*f$} (!)
(*!) edge (X)
(!) edge node [right] {$f_!p$} (Y)
(X) edge node [above] {$f$} (Y);
\end{tikzpicture}

\end{center}
Since $f$ is a monomorphism, so is its pullback $(fp)^*f$. Since $((fp)^*f)\eta_P=\id_P$, by the uniqueness of the epi-mono factorization, the map $\eta_P$ is an equivalence, as required.

\end{proof}

\section{Classifying group extensions and semidirect products}

\begin{defn}

Let $\mathscr X$ be an $\infty$-topos, and $G,A$ two group objects in it. A \emph{group extension} of $G$ by $A$ is a fibration sequence $A\to\Hat G\to G$ in $\Grp\mathscr X$

\end{defn}

That is, group extensions of $G$ by $A$ are equivalent to pointed fibration sequences of the form $\mathbf BA\to\mathbf B\Hat G\to\mathbf BG$. If we could interpret the latter as $\mathbf BA$-fiber bundles in $\mathscr X_*$, then they could be classified as such. Unfortunately, this is not possible, due to the fact that if $\mathscr X$ is nontrivial, then $\mathscr X_*$ is not an $\infty$-topos.

\begin{prop}

Let $\mathscr X$ be a pointed $\infty$-topos. Then $\mathscr X$ is contractible.

\end{prop}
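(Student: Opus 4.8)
The plan is to reduce the statement to the strictness of the initial object in an $\infty$-topos: in any $\infty$-topos, every morphism whose target is an initial object is an equivalence. Granting this, the proposition is immediate, so the first thing I would do is isolate and prove the strictness claim and then deduce the proposition from it.

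To prove strictness, let $\emptyset\in\mathscr X$ denote an initial object, viewed as the colimit of the empty diagram, and let $f\colon Y\to\emptyset$ be any morphism. By condition (1) of the characterization of $\infty$-topoi recalled above (\cite{lurie2009higher}*{Theorem 6.1.6.8}), colimits in $\mathscr X$ are universal, i.e.\ stable under base change. Pulling back the (empty) colimit cocone exhibiting $\emptyset$ along $f$ yields again the empty cocone, now with cone point $Y$, and universality says this is still a colimit cocone; equivalently $Y$ is initial and $f$ is an equivalence. (If one prefers, this strictness of the initial object of an $\infty$-topos can simply be quoted from \cite{lurie2009higher}.)

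Now I would argue as follows. Since $\mathscr X$ is pointed, its terminal object $\ast$ is simultaneously an initial object. Hence, for every $X\in\mathscr X$, the essentially unique morphism $X\to\ast$ has initial target, so by strictness it is an equivalence. Thus every object of $\mathscr X$ is terminal; in particular every mapping space in $\mathscr X$ is contractible, every object is canonically equivalent to $\ast$, and the inclusion of the full subcategory spanned by a terminal object, $\ast\hookrightarrow\mathscr X$, is fully faithful and essentially surjective, hence an equivalence of $\infty$-categories. Therefore $\mathscr X\simeq\ast$, which is what ``contractible'' means.

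The argument is entirely formal and I do not expect a genuine obstacle; the only delicate point is getting the empty-diagram instance of ``universality of colimits'' exactly right (or, equivalently, correctly citing the strict-initial-object property of $\infty$-topoi), after which the deduction that an $\infty$-topos possessing a zero object is trivial is routine.
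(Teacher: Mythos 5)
Your proof is correct and takes essentially the same route as the paper: the paper also observes that the zero object makes $\ast$ initial and then invokes the strictness of the initial object in an $\infty$-topos (citing \cite{lurie2009higher}*{Lemma 6.1.3.6}) to conclude that every object is initial, hence $\mathscr X$ is contractible. Your derivation of strictness from universality of colimits is just an inlined proof of that cited lemma, so there is no substantive difference.
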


\begin{proof}

Let $X\in\mathscr X$ be an object. Then we have a canonical map $X\to\ast$. Since $\ast$ is also an initial object, $X$ needs to be initial too \cite{lurie2009higher}*{Lemma 6.1.3.6}. This shows that every object in $\mathscr X$ is initial, and thus $\mathscr X$ is contractible

\end{proof}

In Section 2.1, we show that the forgetful functor $\mathscr X_*\xrightarrow U\mathscr X$ has as left adjoint the adjoining a disjoint point functor $\mathscr X\xrightarrow{\sqcup\ast}\mathscr X_*$, and that a pointed square $(\Lambda^2_2)^{\vartriangleleft}\xrightarrow{\Bar\sigma}\mathscr X_*$ is Cartesian if and only if its image $(\Lambda^2_2)^{\vartriangleleft}\xrightarrow{U\Bar\sigma}\mathscr X$ is so. Therefore, we can define the $\infty$-category of group extensions by $A$ as follows.

\begin{defn}

The \emph{$\infty$-category of group extensions by $A$} is the full subcategory $\Ext_A\subseteq\Cart_{\mathscr X_*}$ on maps with codomain of the form $\mathbf BG$ for some $G\in\Grp\mathscr X$, and with fiber $\mathbf BA$. Note that by construction the domain is also the delooping of a group object. Let $G\in\Grp\mathscr X$ be a group object. Then $\Ext(G,A)=\Ext_A(\mathbf BG)$ is the \emph{classifying space of extensions of $G$ by $A$}.

\end{defn}

We can make the universal $\mathbf BA$-fiber bundle $\mathbf BA\sslash\BAut\mathbf BA\xrightarrow q\mathbf B\BAut\mathbf BA$ pointed in such a way that the Cartesian diagram
\begin{center}

\begin{tikzpicture}[xscale=2,yscale=1.5]
\node (BA) at (0,1) {$\mathbf BA$};
\node (BA/) at (1,1) {$\mathbf BA\sslash\BAut(\mathbf BA)$};
\node (*) at (0,0) {$*$};
\node (BAut) at (1,0) {$\mathbf B\BAut(\mathbf BA)$};
\path[->,font=\scriptsize,>=angle 90]
(BA) edge (BA/)
(BA) edge (*)
(BA/) edge node [right] {$q$} (BAut)
(*) edge node [above] {$\mathbf c_{\mathbf BA}$} (BAut);
\end{tikzpicture}

\end{center}
is pointed. Then in section 2.2, we show that $q$ as a pointed map is a universal group extension by $A$.

\subsection{Preliminaries about pointed $\infty$-categories}

\begin{notn}

Let $\mathscr X$ be an $\infty$-category with a final object. Then let $\mathscr X_*\xrightarrow U\mathscr X$ denote the forgetful map, and let $\mathscr X\xrightarrow F\mathscr X_*$ denote the map adding a disjoint point.

\end{notn}

\begin{prop}\label{prop:adjunction for pointed}

Let $\mathscr X$ be an $\infty$-category with a final object. Then the pair $(F,U)$ is an adjunction.

\end{prop}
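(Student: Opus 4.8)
The plan is to apply the unit criterion for adjunctions of $\infty$-categories \cite{lurie2009higher}*{Proposition 5.2.2.8}: it suffices to exhibit a natural transformation $\eta\colon\id_{\mathscr X}\Rightarrow UF$ such that, for every $X\in\mathscr X$ and every pointed object $(Y,y)\in\mathscr X_*$ — where $U(Y,y)=Y$ and $y\colon\ast\to Y$ is the basepoint — the composite
$$
\Map_{\mathscr X_*}\bigl(FX,(Y,y)\bigr)\xrightarrow{\ U\ }\Map_{\mathscr X}(UFX,Y)\xrightarrow{\ \eta_X^*\ }\Map_{\mathscr X}(X,Y)
$$
is a homotopy equivalence. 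Write $\iota_X\colon X\to X\sqcup\ast$ and $\iota_\ast\colon\ast\to X\sqcup\ast$ for the two coprojections — the coproduct exists, being implicit in the definition of $F$ — so that $FX=(\ast\xrightarrow{\iota_\ast}X\sqcup\ast)$ and $UFX=X\sqcup\ast$. I take $\eta_X=\iota_X$. Since forming the coproduct with the fixed object $\ast$ is a functor $\mathscr X\to\mathscr X$ and the coprojection out of $X$ is natural in $X$, this is a natural transformation $\id_{\mathscr X}\Rightarrow((-)\sqcup\ast)=UF$; indeed, the natural map $\const_\ast\Rightarrow((-)\sqcup\ast)$ is exactly the data promoting $(-)\sqcup\ast$ to the functor $F$ into the undercategory.

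For the objectwise equivalence, recall the standard description of morphism spaces in an undercategory: $\Map_{\mathscr X_*}(FX,(Y,y))$ is the fiber, over the vertex $y$, of the precomposition map
$$
\iota_\ast^*\colon\Map_{\mathscr X}(X\sqcup\ast,Y)\longrightarrow\Map_{\mathscr X}(\ast,Y).
$$
By the universal property of the coproduct, $(\iota_X^*,\iota_\ast^*)$ is an equivalence $\Map_{\mathscr X}(X\sqcup\ast,Y)\xrightarrow{\ \simeq\ }\Map_{\mathscr X}(X,Y)\times\Map_{\mathscr X}(\ast,Y)$ carrying $\iota_\ast^*$ to the projection onto the second factor; hence the fiber of $\iota_\ast^*$ over $y$ is equivalent to $\Map_{\mathscr X}(X,Y)\times\{y\}\simeq\Map_{\mathscr X}(X,Y)$. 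Unwinding the definitions, the displayed composite — inclusion of this fiber into $\Map_{\mathscr X}(X\sqcup\ast,Y)$ followed by $\iota_X^*$, i.e.\ by the first projection — is exactly this equivalence. Note there is no claim that $\Map_{\mathscr X}(\ast,Y)$ is contractible: $\ast$ is a \emph{final} object, so it is $\Map_{\mathscr X}(Y,\ast)$ that is contractible, and the basepoint $y$ genuinely intervenes.

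The step requiring the most care is promoting these objectwise statements to coherent data — knowing that $F$ is an honest functor, $\eta$ an honest natural transformation, and that the chain of equivalences above is natural in both $X$ and $(Y,y)$. The first two points reduce to the functoriality of $(-)\sqcup\ast$ and the naturality of $\iota_X$ and $\iota_\ast$; the last reduces to the naturality of the universal property of the coproduct. Granting this routine bookkeeping, \cite{lurie2009higher}*{Proposition 5.2.2.8} gives the adjunction $(F,U)$ with unit $\eta$. Alternatively one can avoid naming $\eta$: when $\mathscr X$ is locally small, $U$ admits a left adjoint as soon as each presheaf $(Y,y)\mapsto\Map_{\mathscr X}(X,U(Y,y))$ on $\mathscr X_*$ is representable, and the computation of the second paragraph exhibits $FX$ as a representing object; the left adjoint so obtained then coincides with "adjoin a disjoint point" by uniqueness of adjoints. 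In either route the only genuine obstacle is this coherence bookkeeping rather than the computation itself.
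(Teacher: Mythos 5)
Your proposal is correct and follows essentially the same route as the paper: both verify the unit criterion of \cite{lurie2009higher}*{Proposition 5.2.2.8} with unit the coprojection $X\to X\sqcup\ast$, identifying $\Map_{\mathscr X_*}(X\sqcup\ast,Y)$ as the fiber of $\Map_{\mathscr X}(X\sqcup\ast,Y)\to\Map_{\mathscr X}(\ast,Y)$ over the basepoint and using the coproduct decomposition of $\Map_{\mathscr X}(X\sqcup\ast,Y)$ (the paper packages this as a pasting of two Cartesian squares via HTT Lemma 5.5.5.12, you as a direct fiber computation). Your closing worry about naturality of the equivalence chain is more than 5.2.2.8 actually demands --- given the functor $F$ and the natural transformation $\eta$, only the objectwise equivalence is required --- so your argument is, if anything, slightly over-cautious rather than gapped.
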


\begin{proof}

Let $X\in\mathscr X$ and $(Y,y)\in\mathscr X_*$. We want the inclusion map $X\xrightarrow{\eta_X}X\sqcup\ast$ to serve as a unit map. Then we need to show that the composite
$$
\Map_{\mathscr X_*}(X\sqcup\ast,Y)\xrightarrow U\Map_\mathscr X(X\sqcup\ast,Y)\xrightarrow{\circ\eta_X}\Map_\mathscr X(X,Y)
$$
is an equivalence. By \cite{lurie2009higher}*{Lemma 5.5.5.12 and Theorem 4.2.4.1}, we have a pasting diagram
\begin{center}

\begin{tikzpicture}[xscale=4,yscale=2]
\node (*X*) at (0,1) {$\Map_{\mathscr X_*}(X\sqcup\ast,Y)$};
\node (X*) at (1,1) {$\Map_{\mathscr X}(X\sqcup\ast,Y)$};
\node (X) at (2,1) {$\Map_{\mathscr X}(X,Y)$};
\node (*0) at (0,0) {$\ast$};
\node (*Y) at (1,0) {$\Map_{\mathscr X}(\ast,Y)$};
\node (*2) at (2,0) {$\ast,$};
\node at (0.5,0.5) {$\lrcorner$};
\node at (1.5,0.5) {$\lrcorner$};
\path[->,font=\scriptsize,>=angle 90]
(*X*) edge node [above] {$U$} (X*)
(*X*) edge (*0)
(X*) edge node [above] {$\circ\eta_X$} (X)
(X) edge (*2)
(X*) edge node [right] {$\circ i_*$} (*Y)
(*0) edge node [above] {$\{y\}$} (*Y)
(*Y) edge (*2);
\end{tikzpicture}

\end{center}
which proves our claim.

\end{proof}

\begin{prop}\label{prop:Cartesian in pointed}

Let $\ast\star\Lambda_2^2\xrightarrow{\Bar\sigma_*}\mathscr X_*$ be a pointed square. Then it is Cartesian precisely when its image $\ast\star\Lambda_2^2\xrightarrow{U\Bar\sigma_*}\mathscr X$ is Cartesian.

\end{prop}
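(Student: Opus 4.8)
The plan is to verify the universal property of the pullback on mapping spaces, using the description of the hom-spaces of the undercategory $\mathscr X_*=\mathscr X_{\ast/}$ from \cite{lurie2009higher}*{Lemma 5.5.5.12 and Theorem 4.2.4.1} that was already used in the proof of Proposition \ref{prop:adjunction for pointed}. Write the pointed square $\Bar\sigma_*$ as a cospan $(\ast\to A)\to(\ast\to C)\leftarrow(\ast\to B)$ equipped with a cone $(\ast\to P)$, so that $U\Bar\sigma_*$ is the underlying square $P\to A$, $P\to B$, $A\to C$, $B\to C$ in $\mathscr X$, obtained by forgetting the points.

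The forward direction is immediate: by Proposition \ref{prop:adjunction for pointed} the functor $U$ is a right adjoint, hence preserves limits; since $\Bar\sigma_*$ being Cartesian means that $\Bar\sigma_*$ exhibits $P$ as the pullback of the cospan in $\mathscr X_*$, its image exhibits $UP$ as the pullback of the underlying cospan in $\mathscr X$, i.e.\ $U\Bar\sigma_*$ is Cartesian. (Concretely, one uses the equivalence $\Map_{\mathscr X}(Z,UY)\simeq\Map_{\mathscr X_*}(Z\sqcup\ast,Y)$ and applies $\Map_{\mathscr X_*}(Z\sqcup\ast,-)$ to $\Bar\sigma_*$ for each $Z\in\mathscr X$.)

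For the converse, assume $U\Bar\sigma_*$ is Cartesian. Given $T\in\mathscr X_*$ with underlying object $UT$ and structure point $\ast\xrightarrow{t}UT$, the cited results furnish, for every $Y\in\mathscr X_*$, a fiber sequence
$$
\Map_{\mathscr X_*}(T,Y)\longrightarrow\Map_{\mathscr X}(UT,UY)\xrightarrow{\ -\circ t\ }\Map_{\mathscr X}(\ast,UY),
$$
the fiber being taken over the point of $\Map_{\mathscr X}(\ast,UY)$ given by the basepoint of $Y$. Applying the representable functors $\Map_{\mathscr X}(UT,-)$ and $\Map_{\mathscr X}(\ast,-)$ to the Cartesian square $U\Bar\sigma_*$ produces two Cartesian squares of spaces, and naturality of $-\circ t$ assembles them into a map of Cartesian squares. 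A fiber is a pullback along a point, and limits commute with limits, so the fiber of this map of Cartesian squares over the compatible basepoints of $A,B,C$ is the pullback of the three fibers; rewriting each fiber via the displayed fiber sequence gives
$$
\Map_{\mathscr X_*}(T,P)\simeq\Map_{\mathscr X_*}(T,A)\times_{\Map_{\mathscr X_*}(T,C)}\Map_{\mathscr X_*}(T,B),
$$
naturally in $T$. By the Yoneda lemma $\Bar\sigma_*$ exhibits $P$ as the pullback of the cospan in $\mathscr X_*$, which is the assertion.

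The step requiring genuine care is the last piece of bookkeeping: one must confirm that the basepoints of $A$ and $B$ agree over $C$ and that the induced point of the pullback $UP$ is the given basepoint of $P$ — this is precisely where the hypothesis that $\Bar\sigma_*$ is pointed as a \emph{square}, and not merely vertexwise, enters — and that the fiber of a map of pullback squares of spaces over a compatible pair of basepoints really is the pullback square of the fibers. Both are routine manipulations with limits of spaces, but together they are the heart of the argument.
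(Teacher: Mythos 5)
Your proof is correct, but the sufficiency direction follows a genuinely different route from the paper's. The necessity half coincides: $U$ is a right adjoint by Proposition \ref{prop:adjunction for pointed}, hence preserves Cartesian squares. For sufficiency the paper argues combinatorially in the quasi-category model: writing $\mathscr X_*=\mathscr X_{\ast/}$, the join--slice adjunction converts each lifting problem $\partial\Delta^n\subset\Delta^n$ against $(\mathscr X_*)_{/\Bar\sigma_*}\to(\mathscr X_*)_{/\sigma_*}$ into a lifting problem $\ast\star\partial\Delta^n\subset\ast\star\Delta^n$ against $\mathscr X_{/U\Bar\sigma_*}\to\mathscr X_{/U\sigma_*}$, which is solvable because the latter map is a trivial fibration once $U\Bar\sigma_*$ is Cartesian; in effect the paper reproves, in this special case, the general fact that the projection from an undercategory creates limits. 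You instead detect Cartesianness on mapping spaces, using the fiber sequence $\Map_{\mathscr X_*}(T,Y)\to\Map_{\mathscr X}(UT,UY)\to\Map_{\mathscr X}(\ast,UY)$ and commuting fibers past the Cartesian squares obtained by applying $\Map_{\mathscr X}(UT,-)$ and $\Map_{\mathscr X}(\ast,-)$ to $U\Bar\sigma_*$. This is sound, and it isolates nicely where the pointing of the whole square (rather than just of its vertices) is used. What your route buys is model-independence and a statement that transports to any presentation of $\mathscr X_*$; what it costs is precisely the bookkeeping you defer at the end: one must check that the fiber identification is natural in $Y$ along the four edges of the square, so that the Cartesian square of fibers you construct is the image of $\Bar\sigma_*$ under $\Map_{\mathscr X_*}(T,-)$ together with its canonical comparison map, and not merely abstractly equivalent to it --- otherwise the Yoneda/detection step does not apply. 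The paper's join argument sidesteps this compatibility entirely (and needs no naturality in $T$), at the price of being tied to explicit simplicial constructions.
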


\begin{proof}

If $\Bar\sigma_*$ is Cartesian, then so is $U\Bar\sigma_*$, since $U$ is a right adjoint, which shows necessity. To prove sufficiency, let us suppose that $U\Bar\sigma_*$ is Cartesian. Let us denote by $\sigma_*$ the restriction $\Bar\sigma_*|\Lambda_2^2$. We need to show that for any $n\ge0$, any lifting problem of the form
\begin{center}

\begin{tikzpicture}[scale=1.5]

\node (delta) at (0,1) {$\partial\Delta^n$};
\node (square) at (1,1) {$(\mathscr X_*)_{/\Bar\sigma_*}$};
\node (Delta) at (0,0) {$\Delta^n$};
\node (horn) at (1,0) {$(\mathscr X_*)_{/\sigma_*}$};
\path[->,font=\scriptsize,>=angle 90]
(delta) edge (square)
(delta) edge (Delta)
(square) edge (horn)
(Delta) edge (horn);

\end{tikzpicture}

\end{center}
has a solution. But such a lifting problem is the same as a lifting problem of the form
\begin{center}

\begin{tikzpicture}[xscale=2,yscale=1.5]

\node (delta) at (0,1) {$\ast\star\partial\Delta^n$};
\node (square) at (1,1) {$(\mathscr X)_{/U\Bar\sigma_*}$};
\node (Delta) at (0,0) {$\ast\star\Delta^n$};
\node (horn) at (1,0) {$(\mathscr X)_{/U\sigma_*}$};
\path[->,font=\scriptsize,>=angle 90]
(delta) edge (square)
(delta) edge (Delta)
(square) edge (horn)
(Delta) edge (horn);

\end{tikzpicture}

\end{center}
which has a solution, since $U\Bar\sigma_*$ is Cartesian.

\end{proof}

\subsection{Classifying group extensions and semidirect products}

Now we are ready to classify group extensions. Let us fix a group object $A\in\Grp\mathscr X$.

\begin{thm}\label{thm:classifying group extensions}

Let $\mathbf BA\sslash\BAut(\mathbf BA)\xrightarrow q\mathbf B\BAut(\mathbf BA)$ denote the universal $\mathbf BA$-fiber bundle in $\mathscr X$. Let us make $q$ a pointed map in such a way that the Cartesian diagram
\begin{center}

\begin{tikzpicture}[xscale=2,yscale=1.5]
\node (BA) at (0,1) {$\mathbf BA$};
\node (BA/) at (1,1) {$\mathbf BA\sslash\BAut(\mathbf BA)$};
\node (*) at (0,0) {$*$};
\node (BAut) at (1,0) {$\mathbf B\BAut(\mathbf BA)$};
\path[->,font=\scriptsize,>=angle 90]
(BA) edge (BA/)
(BA) edge (*)
(BA/) edge node [right] {$q$} (BAut)
(*) edge node [above] {$\mathbf c_{\mathbf BA}$} (BAut);
\end{tikzpicture}

\end{center}
is pointed. Then $q$ is a universal group extension by $A$.

\end{thm}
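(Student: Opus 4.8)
The plan is to run the proof of Theorem~\ref{thm:classifying fiber bundles} a second time, now inside $\mathscr X_*$, using Propositions~\ref{prop:adjunction for pointed} and~\ref{prop:Cartesian in pointed} to compensate for the fact that $\mathscr X_*$ is not an $\infty$-topos and so carries no object classifier of its own: the rôle of the object classifier will be played by the universality of $q$ in $\Bun_{\mathbf BA}$ that we have already established, pushed across the forgetful functor $\mathscr X_*\xrightarrow U\mathscr X$.

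First I would record that $q$, pointed as in the statement, really is an object of $\Ext_A$: its codomain $\mathbf B\BAut(\mathbf BA)$ is a delooping, hence connected, hence of the form $\mathbf BG$, and the displayed Cartesian square identifies its pointed fibre with $\mathbf BA$. Moreover $U$ restricts to a functor $\Ext_A\to\Bun_{\mathbf BA}$: given a group extension $\mathbf BA\to\mathbf B\widehat G\xrightarrow p\mathbf BG$, the base $\mathbf BG$ is connected, so $\ast\to\mathbf BG$ is an effective epimorphism and the pullback of $p$ along it is the fibre $\mathbf BA$; hence $p$ is a $\mathbf BA$-fibre bundle with trivialising cover $\ast\twoheadrightarrow\mathbf BG$. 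Under this functor $q\mapsto Uq$, the universal $\mathbf BA$-fibre bundle.

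The heart of the matter is to transport the chain of trivial fibrations. Since pullback squares in $\mathscr X_*$ are detected in $\mathscr X$ (Proposition~\ref{prop:Cartesian in pointed}), the map $\Cart_{\mathscr X_*}\to\mathscr X_*$ taking an arrow to its codomain is the restriction of a Cartesian fibration to its Cartesian edges, i.e.\ a right fibration, so Lemma~\ref{lem:overcat of right fib} yields a trivial fibration $(\Cart_{\mathscr X_*})_{/q}\xrightarrow{\cod}(\mathscr X_*)_{/\mathbf B\BAut(\mathbf BA)}$. Because $\mathbf B\BAut(\mathbf BA)$ is connected every fibre of $q$ is $\mathbf BA$, so a pullback of $q$ along a pointed map is a group extension by $A$ precisely when its base is connected; restricting the source to $\Ext_A$ therefore cuts the above down to a trivial fibration $(\Ext_A)_{/q}\xrightarrow{\cod}(\mathscr X_*^{\ge1})_{/\mathbf B\BAut(\mathbf BA)}$, where $\mathscr X_*^{\ge1}$ is the $\infty$-category of pointed connected objects. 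Combined with the tautological slice projection $(\Ext_A)_{/q}\to\Ext_A$ and the looping--delooping equivalence $\mathscr X_*^{\ge1}\simeq\Grp\mathscr X$, this is the content of $q$ being a universal group extension by $A$, and it identifies the classifying space $\Ext(G,A)$; specialising to split extensions — a splitting being a lift of the classifying datum along $q$ into the total space $\mathbf BA\sslash\BAut(\mathbf BA)$ — one then reads off that semidirect products of $G$ by $A$ are classified by $\mathbf BA\sslash\BAut(\mathbf BA)$.

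The step I expect to cause the real work is the last one, and specifically the basepoint bookkeeping underneath it: one must verify that ``Cartesian'' may be tested in $\mathscr X$ at every stage — this is why Proposition~\ref{prop:Cartesian in pointed} is indispensable, since the object classifier, the epi--mono factorisation, and the monomorphism $\mathbf B\BAut(\mathbf BA)\hookrightarrow\Obj_\kappa$ used in Theorem~\ref{thm:classifying fiber bundles} all live in $\mathscr X$, not in $\mathscr X_*$ — that the equivalence $\mathscr X_*^{\ge1}\simeq\Grp\mathscr X$ is compatible with the codomain projections, and that the particular pointing of $q$ fixed in the statement makes the comparison functor land where it should. The adjunction $(F,U)$ of Proposition~\ref{prop:adjunction for pointed} is what lets one shuttle the necessary mapping-space computations back and forth between $\mathscr X_*$ and $\mathscr X$ while carrying out these checks.
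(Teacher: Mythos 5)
There is a genuine gap, and it sits exactly where the theorem's content lies. Your chain of trivial fibrations only produces the ``classification'' leg: the observation that $\Cart_{\mathscr X_*}\to\mathscr X_*$ is a right fibration, hence (Lemma \ref{lem:overcat of right fib}) $(\Cart_{\mathscr X_*})_{/q}\to(\mathscr X_*)_{/\mathbf B\BAut(\mathbf BA)}$ is a trivial fibration, holds for \emph{any} object of $\Cart_{\mathscr X_*}$ with that codomain and carries no universality whatsoever. What the theorem asserts is that $q$ is a \emph{final} object of $\Ext_A$, i.e.\ that the other leg, the forgetful projection $(\Ext_A)_{/q}\to\Ext_A$, is a trivial fibration; you never prove this, you only invoke it as ``the tautological slice projection'' and declare that, combined with your trivial fibration and looping--delooping, ``this is the content'' of universality. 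It is not: in the unpointed Theorem \ref{thm:classifying fiber bundles} that leg is exactly the part that needs the object classifier, the monomorphism $\mathbf B\BAut(\mathbf BA)\hookrightarrow\Obj_\kappa$, and the pullback-square argument, none of which exist in $\mathscr X_*$. Concretely, given a pointed extension $\mathbf BA\to\mathbf B\Hat G\to\mathbf BG$ (or a simplex of such, since one must prove a lifting property, not just essential surjectivity), Theorem \ref{thm:classifying fiber bundles} hands you an essentially unique \emph{unpointed} Cartesian square to $q$; the problem is to equip that square coherently with basepoints compatible with the given pointings, and to do so with the contractible ambiguity required for a trivial fibration. Propositions \ref{prop:adjunction for pointed} and \ref{prop:Cartesian in pointed} only let you \emph{forget} basepoints (detect Cartesianness after applying $U$); they do not produce the pointed classifying square, which is the direction you need.

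This re-pointing step is precisely what the paper's proof is about: it translates the lifting problem for $(\Ext_A)_{/q}\to\Ext_A$ into joins, uses the right anodyne inclusion $\partial\Delta^{[1,n+2]}\subset\Lambda^{n+2}_0$ to set up the data, passes to $\Bun_{\mathbf BA}$ via Proposition \ref{prop:Cartesian in pointed} and solves there by Theorem \ref{thm:classifying fiber bundles}, and then reinstates the basepoint by extending along the inner anodyne inclusions of Lemma \ref{lem:make it pointed} (with Lemma \ref{lem:subsimplex right anodyne}). Your proposal contains none of this, and your closing paragraph, while correctly identifying ``basepoint bookkeeping'' as the real work, offers no mechanism for it. On the positive side, your preliminary observations are sound and in the spirit of the paper: pointed $q$ does lie in $\Ext_A$, and the remark that a group extension has connected base, so that $\ast\twoheadrightarrow\mathbf BG$ trivializes it and $U$ carries $\Ext_A$ into $\Bun_{\mathbf BA}$, is exactly the reason the unpointed classification applies; likewise your restriction of the codomain trivial fibration to connected bases is a reasonable (if unchecked) refinement. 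But as it stands the proposal proves a statement true of every pointed Cartesian square over $\mathbf B\BAut(\mathbf BA)$, not the finality of $q$ in $\Ext_A$.
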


\begin{cor}\label{cor:adjoint action}

Semidirect products of $G$ and $A$ are classified by pointed morphisms $\mathbf BG\to\mathbf BA\sslash\BAut A$.

\end{cor}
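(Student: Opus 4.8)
The plan for Theorem~\ref{thm:classifying group extensions} is to transport the classification of $\mathbf BA$-fiber bundles (Theorem~\ref{thm:classifying fiber bundles}) from $\mathscr X$ to $\mathscr X_*$, using the comparison results of Section~2.1 to carry the basepoints; one cannot argue inside $\mathscr X_*$ directly, since it is not an $\infty$-topos and admits no small object classifier. Write $q_0=Uq$ for the underlying unpointed map. By Theorem~\ref{thm:classifying fiber bundles}, $q_0$ is a final object of $\Bun_{\mathbf BA}$, hence of the full subcategory $\Bun_{\mathbf BA}^{\mathrm{conn}}\subseteq\Bun_{\mathbf BA}$ spanned by the bundles with connected base. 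First I would check that $q$, pointed as in the statement, genuinely lies in $\Ext_A$: its codomain is the delooping of the group object $\BAut(\mathbf BA)=\Omega\mathbf B\BAut(\mathbf BA)$; the displayed pointed Cartesian square exhibits $\mathbf BA$ as the fiber of $q$ over $\mathbf c_{\mathbf BA}$ and supplies the fiber-sequence datum $\mathbf BA\to\mathbf BA\sslash\BAut(\mathbf BA)\to\mathbf B\BAut(\mathbf BA)$; and the total space $\mathbf BA\sslash\BAut(\mathbf BA)$, being that of a fiber bundle with connected fiber over a connected base, is connected and pointed, hence again a delooping.

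Next I would set up the reduction. By Proposition~\ref{prop:Cartesian in pointed} the forgetful functor $U\colon\mathscr X_*\to\mathscr X$ --- a coslice projection, hence a left fibration --- induces $U_*\colon\Cart_{\mathscr X_*}\to\Cart_{\mathscr X}$ which both preserves and detects pullback squares and is again a left fibration, and which restricts to a left fibration $U_*\colon\Ext_A\to\Bun_{\mathbf BA}^{\mathrm{conn}}$ realizing $\Ext_A$ as the full preimage of $\Bun_{\mathbf BA}^{\mathrm{conn}}$ (over a connected base, local triviality with fiber $\mathbf BA$ is the same as having fiber $\simeq\mathbf BA$). The crux is to show that the fiber of this left fibration over $q_0$ --- the space of ways to upgrade the bundle $q_0$ to a group extension by $A$, i.e.\ of a point of its total space together with an identification of the corresponding (canonically pointed) fiber with $\mathbf BA$ --- is \emph{contractible}: forgetting the fiber identification it is a torsor-bundle under $\Aut_{\Grp}(A)$ (the automorphism group of $A$ as a group object) over $\mathbf BA\sslash\BAut(\mathbf BA)$, and under the identification $\mathbf BA\sslash\BAut(\mathbf BA)\simeq\mathbf B\Aut_{\Grp}(A)$ --- coming from $\mathbf BA\simeq\BAut(\mathbf BA)/\Aut_{\Grp}(A)$ --- it is the tautological $\Aut_{\Grp}(A)$-torsor, whose total space is contractible. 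This is the step I expect to be the main obstacle, precisely because $\mathscr X_*$ provides no classifier: the rigidification must be produced from the basepoint $\mathbf c_{\mathbf BA}$ alone, using Proposition~\ref{prop:Cartesian in pointed} and the left fibration $U$. Granting it, $q$ is a final object of $\Ext_A$, since $q_0$ is final in $\Bun_{\mathbf BA}^{\mathrm{conn}}$ and the fiber over it is contractible; equivalently, the square
\begin{center}
\begin{tikzpicture}[xscale=2.7,yscale=1.5]
\node (A) at (0,1) {$(\Ext_A)_{/q}$};
\node (B) at (1,1) {$(\Bun_{\mathbf BA}^{\mathrm{conn}})_{/q_0}$};
\node (C) at (0,0) {$\Ext_A$};
\node (D) at (1,0) {$\Bun_{\mathbf BA}^{\mathrm{conn}}$};
\node at (0.78,0.28) {$\lrcorner$};
\path[->,font=\scriptsize,>=angle 90]
(A) edge (B) (A) edge (C) (B) edge node [right] {$\simeq$} (D) (C) edge node [below] {$U_*$} (D);
\end{tikzpicture}
\end{center}
is Cartesian, so $(\Ext_A)_{/q}\to\Ext_A$ is a trivial fibration. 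Composing its upper edge with the equivalence $(\Bun_{\mathbf BA}^{\mathrm{conn}})_{/q_0}\simeq\mathscr X_{/\mathbf B\BAut(\mathbf BA)}^{\mathrm{conn}}$ of Theorem~\ref{thm:classifying fiber bundles}, and noting that the classifying map of an extension is canonically pointed at $\mathbf c_{\mathbf BA}$ (the basepoint of the base classifies the fiber $\mathbf BA$, and the choice of pointing-path is exactly the fiber identification), one obtains the zigzag of trivial fibrations
$$
\Ext_A\longleftarrow(\Ext_A)_{/q}\longrightarrow(\mathscr X_*)_{/(\mathbf B\BAut(\mathbf BA),\,\mathbf c_{\mathbf BA})}^{\mathrm{conn}}
$$
with values in pointed maps out of deloopings, which is the assertion that $q$ is a universal group extension by $A$.

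Corollary~\ref{cor:adjoint action} then follows by restricting along sections. A semidirect product of $G$ by $A$ is a group extension $A\to\widehat G\to G$ together with a splitting, which after delooping is a pointed section of $\mathbf B\widehat G\to\mathbf BG$. By the theorem this extension is the pullback of $q$ along an essentially unique pointed $\psi\colon\mathbf BG\to\mathbf B\BAut(\mathbf BA)$, so $\mathbf B\widehat G\to\mathbf BG$ is $\psi^*q$; by the pullback--hom adjunction, with basepoints tracked via Propositions~\ref{prop:adjunction for pointed} and~\ref{prop:Cartesian in pointed}, a pointed section of $\psi^*q$ over $\mathbf BG$ is the same as a pointed lift of $\psi$ through $q$, that is, a pointed morphism $\mathbf BG\to\mathbf BA\sslash\BAut A$ (the total space of $q$) composing with $q$ to $\psi$. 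Hence the semidirect products of $G$ by $A$ are exactly the pointed morphisms $\mathbf BG\to\mathbf BA\sslash\BAut A$; under $\mathbf BA\sslash\BAut(\mathbf BA)\simeq\mathbf B\Aut_{\Grp}(A)$ such a morphism is an action $G\to\Aut_{\Grp}(A)$, the $\infty$-categorical adjoint action classifying the semidirect product.
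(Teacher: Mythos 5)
Your final paragraph is the part that actually addresses Corollary \ref{cor:adjoint action}, and it is correct and matches what the paper (implicitly) does: a semidirect product is an extension together with a splitting, the splitting deloops to a pointed section of $\mathbf B\Hat G\to\mathbf BG$, and once Theorem \ref{thm:classifying group extensions} identifies the extension with $\psi^*q$ for an essentially unique pointed $\psi\colon\mathbf BG\to\mathbf B\BAut(\mathbf BA)$, a pointed section of $\psi^*q$ is the same datum as a pointed lift of $\psi$ through $q$, i.e.\ a pointed map $\mathbf BG\to\mathbf BA\sslash\BAut(\mathbf BA)$. The paper offers no separate argument for the corollary beyond this, so if you had simply invoked Theorem \ref{thm:classifying group extensions} you would be done.

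The problem is that you do not invoke it: you replace its proof by your own left-fibration argument, and that argument has a genuine gap at exactly the step you flag. Concretely, the fiber of $U_*\colon\Ext_A\to\Bun_{\mathbf BA}$ over $q_0=Uq$ is the space of pointings of $q_0$ that land in $\Ext_A$; since $\Ext_A$ is by definition a \emph{full} subcategory of $\Cart_{\mathscr X_*}$, ``fiber $\mathbf BA$'' is a property of a pointed map, and no identification of the fiber with $\mathbf BA$ is recorded. That space is therefore (a union of components of) $\Map_{\mathscr X}(\ast,\mathbf BA\sslash\BAut(\mathbf BA))$, which is not contractible: already for $\mathscr X=\mathscr S$ and $A$ an ordinary group with nontrivial automorphisms it is equivalent to $\mathrm B\Aut(A)$. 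Your torsor computation silently upgrades the objects of $\Ext_A$ to carry a chosen pointed identification of the fiber with $\mathbf BA$ as \emph{data}; with that extra datum the fiber is indeed the tautological torsor and contractible, but then the category you are working with is no longer the $\Ext_A$ of the paper, it is not the ``full preimage'' of the connected-base bundles, and the left-fibration bookkeeping (and the finality statement itself) has to be redone for the rigidified category. So the crux is not a routine verification one can defer: it is exactly the property-versus-structure issue for the fiber, and as written your fiberwise argument does not close. Note also that even if completed, your route would be genuinely different from the paper's: there Theorem \ref{thm:classifying group extensions} is proved by a direct simplicial lifting argument, converting the finality lifting problem for $(\Ext_A)_{/q}\to\Ext_A$, via Proposition \ref{prop:Cartesian in pointed} and the trivial fibration $(\Bun_{\mathbf BA})_{/q}\to\Bun_{\mathbf BA}$ of Theorem \ref{thm:classifying fiber bundles}, into extension problems along explicit right and inner anodyne inclusions (Lemmas \ref{lem:make it pointed} and \ref{lem:subsimplex right anodyne}), with no straightening or fiberwise contractibility claim.
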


\begin{rem}

Suppose that $A$ is an $E_2$-group. Then an $\infty$-group extension $A\to\Hat G\to G$ is called \emph{central}, when the map $\mathbf BA\to\mathbf B\Hat G$ is deloopable. The fibration sequence $\mathbf BA\to*\to\mathbf B^2A$ is classified by a map $\mathbf B^2A\xrightarrow Z\mathbf B\BAut\mathbf BA$. Therefore, a group extension of $G$ by $A$ is central precisely when its classifying map factors through $Z$. This is why the classifying space of central extensions is defined as $\Map(\mathbf BG,\mathbf B^2A)$ in \cite{nikolaus2014principal}. Note also that the map classifying a semidirect product which is central factors through $\ast$, and is thus trivial.

\end{rem}

\begin{proof}[Proof of Theorem \ref{thm:classifying group extensions}]

We claim that $q\in\Ext_A$ is a final object, that is the restriction map $(\Ext_A)_{/q}\to\Ext_A$ is a trivial fibration. Let $n\ge0$, and suppose given a lifting problem of the following form
\begin{center}

\begin{tikzpicture}[xscale=2,yscale=1.5]
\node (simplex) at (0,0) {$\Delta^n$};
\node (boundary) at (0,1) {$\partial\Delta^n$};
\node (extq) at (1,1) {$(\Ext_A)_{/q}$};
\node (ext) at (1,0) {$\Ext_A.$};
\path[->,font=\scriptsize,>=angle 90]
(boundary) edge (simplex)
(simplex) edge node [above] {$\tau$} (ext)
(boundary) edge node [above] {$\sigma$} (extq)
(extq) edge (ext);
\end{tikzpicture}

\end{center}
This gives rise to the following lifting problem
\begin{center}

\begin{tikzpicture}[xscale=4,yscale=1.5]
\node (simplex) at (0,0) {$\Delta^{[0,n+1]}\sqcup_{\Lambda^{[0,n+1]}_0}\Delta^{\widehat{[1,n+1]}}$};
\node (boundary) at (0,1) {$\Delta^{[1,n+1]}\sqcup_{\partial\Delta^{[1,n+1]}}\Lambda^{[1,n+2]}_{n+2}$};
\node (extq) at (1,1) {$\Cart_{\mathscr X_*}$};
\node (ext) at (1,0) {$\mathscr X_*,$};
\path[->,font=\scriptsize,>=angle 90]
(boundary) edge (simplex)
(simplex) edge node [above] {$(\sigma,\tau)'$} (ext)
(boundary) edge node [above] {$(U\sigma,U\tau)'$} (extq)
(extq) edge (ext);
\end{tikzpicture}

\end{center}
where we denote by $\Delta^{\widehat{[1,n+1]}}\subseteq\Delta^{n+2}$ the sub-simplicial set with all subsets of $[0,n+2]$ not containing $[1,n+1]$. One can check that the inclusion $\Delta^{[1,n+1]}\sqcup_{\partial\Delta^{[1,n+1]}}\Lambda^{[1,n+2]}_{n+2}\subset\Delta^{[0,n+1]}\sqcup_{\Lambda^{[0,n+1]}_0}\Delta^{\widehat{[1,n+1]}}$ is actually $\partial\Delta^{[1,n+2]}\subset\Lambda^{n+2}_0$. This inclusion is right anodyne, because a lifting problem along it can be understood as a lifting problem along $\partial\Delta^{[1,n+1]}\times\{1\}\subset\partial\Delta^{[1,n+1]}\times\Delta^1$ by extending $\Delta^0$ to $\partial\Delta^{[1,n+1]}\times\{0\}$ in a degenerate way, and that is right anodyne \cite{lurie2009higher}*{Proposition 2.1.2.6${}^\op$}. Let $\Lambda^{n+2}_0\xrightarrow{\tau_{\mathbf BA}}\Cart_{\mathscr X_*}$ be a lift. Then by Proposition \ref{prop:Cartesian in pointed}, the restriction $U\tau_{\mathbf BA}$ gives a lifting problem of the form
\begin{center}

\begin{tikzpicture}[xscale=2,yscale=1.5]
\node (simplex) at (0,0) {$\Delta^{n+1}$};
\node (boundary) at (0,1) {$\partial\Delta^{n+1}$};
\node (extq) at (1,1) {$(\Bun_{\mathbf BA})_{/q}$};
\node (ext) at (1,0) {$\Bun_{\mathbf BA}$.};
\path[->,font=\scriptsize,>=angle 90]
(boundary) edge (simplex)
(simplex) edge (ext)
(boundary) edge (extq)
(extq) edge (ext);
\end{tikzpicture}

\end{center}
Let us denote a solution of that by $\Delta^{n+1}\xrightarrow{(U\sigma)_{\mathbf BA}}(\Bun_{\mathbf BA})_{/q}$.

Let $\Lambda^{n+2}_0\xrightarrow{\tau_{\mathbf BA}}\Cart_{\mathscr X_*}$ define a diagram $\Lambda^{[0,\widehat{n+3},n+4]}_0\xrightarrow{\tau'}\Arr\mathscr X$. Let $\Delta^{n+1}\xrightarrow{(U\sigma)_{\mathbf BA}}(\Bun_{\mathbf BA})_{/q}$ define a diagram $\Delta^{[1,n+3]}\to\Arr\mathscr X$, which we can extend to $d_0\xrightarrow{\sigma'}\Arr\mathscr X$ along the degenerate edge $(n+3)\xrightarrow r(n+4)$. Then we get a diagram $\Lambda^{[0,\widehat{n+3},n+4]}_0\cup d_0\xrightarrow{(\tau',\sigma')'}\Arr\mathscr X$. Note that $\Lambda^{[0,\widehat{n+3},n+4]}_0\cup d_0=(\partial d_{n+3})\cup d_0$. Then we can get a lift $\Delta^{n+4}\xrightarrow{\Tilde\sigma}\Arr\mathscr X$ along the inner anodyne inclusions $(\partial d_{n+3})\cup d_0\subset\Lambda^{n+4}_{n+3}$ (Lemma \ref{lem:make it pointed}) and $\Lambda^{n+4}_{n+3}\subset\Delta^{n+4}$. The restriction $\Tilde\sigma d_{n+3}$ will be a solution to the original lifting problem.

\end{proof}

\begin{lem}\label{lem:make it pointed}

Let $n\ge0$. Then the inclusion $(\partial d_{n+3})\cup d_0\subset\Lambda^{n+4}_{n+3}$ is inner anodyne.

\end{lem}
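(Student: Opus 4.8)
The statement is a purely combinatorial fact about simplicial sets: we must show the inclusion $(\partial d_{n+3})\cup d_0\subset\Lambda^{n+4}_{n+3}$ is inner anodyne. Here the notation is that $d_j$ denotes the $j$-th face $\Delta^{\widehat{\{j\}}}\subset\Delta^{n+4}$, $\partial d_{n+3}$ is the boundary of the face $d_{n+3}$, and the horn $\Lambda^{n+4}_{n+3}$ is the union of all faces $d_j$ for $j\ne n+3$. The plan is to build $\Lambda^{n+4}_{n+3}$ out of the subcomplex $(\partial d_{n+3})\cup d_0$ by attaching the remaining nondegenerate simplices one at a time, checking that each attaching map is along an inner horn inclusion $\Lambda^k_i\subset\Delta^k$ with $0<i<k$.

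First I would identify exactly which nondegenerate simplices of $\Lambda^{n+4}_{n+3}$ are missing from $(\partial d_{n+3})\cup d_0$. A nondegenerate $k$-simplex of $\Delta^{n+4}$ is a subset $S\subseteq[n+4]=\{0,1,\dots,n+4\}$ with $|S|=k+1$. It lies in $\Lambda^{n+4}_{n+3}$ iff $S\ne[n+4]$ and $S\ne[n+4]\setminus\{n+3\}$. It lies in $d_0$ iff $0\notin S$, and it lies in $\partial d_{n+3}$ iff $n+3\notin S$ and $S\ne[n+4]\setminus\{n+3\}$. So the simplices we must still attach are precisely those $S$ with $0\in S$ and $n+3\in S$ (and $S\ne[n+4]$). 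The plan is to filter these by cardinality and, within a fixed cardinality, to order them so that when we attach $S$, the faces of $S$ already present form the horn $\Lambda^{S}_{i(S)}$ missing exactly one interior vertex. The natural choice of the "special" vertex is a fixed element that is neither $0$ nor $n+3$ — for instance I would pair each such $S$ with the face $S\setminus\{v_S\}$ where $v_S$ is the largest element of $S$ different from $n+3$; then $v_S$ is never $0$ (since $0\in S$ but if $S$ had $0$ as its largest non-$(n+3)$ element then $S\subseteq\{0,n+3\}$, a case we treat as the base) and never $n+3$, so $i(S)$ is an interior index.

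The key step is to verify that with this pairing, attaching the $S$'s in order of increasing cardinality (and arbitrary order within each cardinality) really does present each new attachment as pushout along an inner horn: one needs that every face of $S$ other than $S\setminus\{v_S\}$ is already in the complex built so far, i.e.\ either has cardinality one less and contains both $0$ and $n+3$ but is not $S\setminus\{v_S\}$ (present because we attach within a cardinality in some fixed order — actually present because it is a proper face already attached, requiring the within-cardinality order to be handled by the standard "fill horns simultaneously" argument, or by a secondary induction), or fails to contain $0$ (so lies in $d_0$) or fails to contain $n+3$ but is not the bad face (so lies in $\partial d_{n+3}$), or equals $[n+4]\setminus\{j\}$ for some interior $j$ — but such a face still contains $0$ and $n+3$ so is handled by the cardinality induction. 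The cleanest way to organize this is exactly the argument used to prove that horn inclusions are anodyne (Lurie, \cite{lurie2009higher}*{Lemma 2.1.2.1} style): set up a finite filtration $(\partial d_{n+3})\cup d_0 = Y_0\subset Y_1\subset\cdots\subset Y_m=\Lambda^{n+4}_{n+3}$ where $Y_{r}$ is obtained from $Y_{r-1}$ by attaching all the "missing" simplices of a fixed dimension along a coproduct of inner horns, each pushout square exhibiting $Y_{r-1}\hookrightarrow Y_r$ as inner anodyne; then compose.

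The main obstacle I anticipate is bookkeeping: correctly checking that the chosen omitted vertex $v_S$ always yields an \emph{inner} index (never $0$, never the top vertex of $S$) in every corner case, and that no face of $S$ other than $S\setminus\{v_S\}$ has been left unattached at the moment we attach $S$. The case $S=\{0,n+3\}$ itself (the unique missing $1$-simplex) and the small-$n$ cases should be checked by hand to make sure the filtration starts correctly. Once the filtration is set up, each step is the standard inner-horn pushout, and inner anodyne maps are closed under pushout and composition, giving the claim.
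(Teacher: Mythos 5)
Your identification of the missing simplices is correct: the nondegenerate simplices of $\Lambda^{n+4}_{n+3}$ not in $(\partial d_{n+3})\cup d_0$ are exactly the subsets $S\subseteq[n+4]$ containing both $0$ and $n+3$ with $S\ne[n+4]$. But the filling scheme built on the pivot $v_S$ (the largest element of $S$ other than $n+3$) has two genuine flaws, not just bookkeeping. First, whenever $n+4\in S$ you get $v_S=n+4=\max S$, so $\Lambda^{S}_{v_S}$ is an \emph{outer} (right) horn; avoiding the particular values $0$ and $n+3$ does not make the index interior, so these attachments are not inner anodyne. Second, and fatally, the companion face $S\setminus\{v_S\}$ again contains $0$ and $n+3$ (except when $S=\{0,n+3\}$), hence is itself on your list of simplices to attach. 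If you run through the list in increasing cardinality, then by the time you reach $S$ its intended free face is already present and you are gluing along all of $\partial\Delta^{S}$, which is not anodyne at all; if instead you mean each horn attachment to dispose of two list members at once (the top simplex and its free face), you need a perfect matching of the list into such pairs, and none exists: the list has $2^{n+3}-1$ elements, an odd number, while every step of the kind you describe (a single embedded inner horn $\Lambda^{S}_{v}\subset\Delta^{S}$ pushed out into a subcomplex of $\Lambda^{n+4}_{n+3}$) adds exactly two new nondegenerate simplices, namely $S$ and $S\setminus\{v\}$. So no choice of pivots and no ordering can make the ``one inner-horn pushout per missing simplex'' filtration terminate exactly at $\Lambda^{n+4}_{n+3}$; the edge $\{0,n+3\}$ is not an isolated base case to check by hand but a symptom of this parity obstruction.

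This is precisely where the paper's proof takes a different shape: instead of attaching simplex by simplex, it adjoins the remaining faces $d_j$ of the horn ($j\notin\{0,n+3\}$) one at a time, and identifies each relative attachment as a pushout-join of the form $A_0\star B\cup A\star B_0\subset A\star B$, which is inner anodyne by the join lemmas \cite{lurie2009higher}*{Lemmas 2.1.2.3--2.1.2.4} combined with the right-anodyne statement of Lemma \ref{lem:subsimplex right anodyne}. A step of that kind adds many nondegenerate simplices at once (an odd number in total), which is exactly what a filtration by single inner-horn pushouts cannot achieve. To repair your argument you would need to allow steps of this join type (or retracts), at which point you are essentially rederiving the paper's decomposition.
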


\begin{proof}

It will be enough to show that the intersections with $d_j$ for $j\in\{0,\dotsc,\widehat{n+3},n+4\}$ are inner anodyne. For $j=0$, we have $d_0\subseteq d_0$. For $0<j<n+3$, we have $d_{0j}\cup d_{j(n+3)}\subset d_j$, which is of the form $\Delta^{[0,j-1]}\star\Delta^{[j+1,\widehat{n+3},n+4]}\cup\Delta^{[1,j-1]}\star\Delta^{[j+1,n+4]}\subset\Delta^{[0,j-1]}\star\Delta^{[j+1,n+4]}$, and that is inner anodyne, because $\Delta^{[1,j-1]}\subset\Delta^{[0,j-1]}$ is right anodyne by Lemma \ref{lem:subsimplex right anodyne}. For $j=n+4$, we have $d_{0(n+3)}\cup d_{(n+3)(n+4)}\subset d_{n+4}$, which is of the form $\Delta^{[0,1]}\star\Delta^{[2,n+2]}\cup\Delta^{\{1\}}\star\Delta^{[2,n+4]}\subset\Delta^{[0,1]}\star\Delta^{[2,n+4]}$, and that is inner anodyne, because $\Delta^{\{1\}}\subset\Delta^{[0,1]}$ is right anodyne by Lemma \ref{lem:subsimplex right anodyne}.

\end{proof}

\begin{lem}\label{lem:subsimplex right anodyne}

Let $\ell>0$. Then for all $0<k\le\ell$, the inclusions $\Delta^{[k,\ell]}\hookrightarrow\Delta^\ell$ are right anodyne.

\end{lem}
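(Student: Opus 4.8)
The plan is to prove the stronger-looking but equivalent claim by a two-step reduction: first to the case $k = 1$, then to a single outer-horn inclusion. For the first step, observe that the inclusion $\Delta^{[k,\ell]} \hookrightarrow \Delta^{[1,\ell]}$ is a pushout-composite of inclusions of the same type with smaller left endpoint, so by induction on $k$ it suffices to treat $k=1$, i.e.\ the inclusion $\Delta^{[1,\ell]} \hookrightarrow \Delta^{[0,\ell]}$; composing with this handles the general case since right anodyne maps are closed under composition. Alternatively, and more cleanly, I would induct on $\ell$: the inclusion $\{\ell\} = \Delta^{[\ell,\ell]} \hookrightarrow \Delta^{\ell}$ is the cone point inclusion into a cone, hence right anodyne by \cite{lurie2009higher}*{Lemma 2.1.2.3} (a right cone inclusion $\{v\} \hookrightarrow K^{\vartriangleleft}$ with $K = \Delta^{[0,\ell-1]}$... — but here one must be careful about which side the cone point sits on).

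The cleanest route: note that $\Delta^{[k,\ell]} \hookrightarrow \Delta^{\ell}$ is the inclusion of a face obtained by repeatedly \emph{omitting an initial vertex}. Omitting the initial vertex $0$ from $\Delta^{[0,m]}$ is exactly the inclusion $\Delta^{[1,m]} = \{0\} \star \emptyset \star \Delta^{[1,m]} \supseteq \ldots$ — more precisely, $\Delta^{[1,m]} \hookrightarrow \Delta^{[0,m]}$ is the inclusion of the face $d_0$, which is the "back face" missing the cone point of the right cone $\Delta^{[0,m]} = \Delta^{\{0\}} \star \Delta^{[1,m]} = (\Delta^{[1,m]})^{\vartriangleright}$... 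I would instead directly cite \cite{lurie2009higher}*{Proposition 2.1.2.6} in the form used already in the proof of Theorem \ref{thm:classifying group extensions}: for any simplicial set $K$, the inclusion $K \times \{1\} \hookrightarrow K \times \Delta^1$ is right anodyne, and pushing out along $K \times \{1\} \to \Delta^0$ when $K = \Delta^{[k+1,\ell]}$-type data builds up $\Delta^{[k,\ell]} \hookrightarrow \Delta^{[k-1,\ell]}$ as a retract/pushout. Concretely, one identifies $\Delta^{[k-1,\ell]}$ with a quotient of $\Delta^{[k,\ell]} \times \Delta^1$ collapsing $\Delta^{[k,\ell]} \times \{0\}$ to the vertex $k-1$, exhibiting $\Delta^{[k,\ell]} = \Delta^{[k,\ell]} \times \{1\} \hookrightarrow \Delta^{[k-1,\ell]}$ as a pushout of the right anodyne map $\Delta^{[k,\ell]} \times \{1\} \hookrightarrow \Delta^{[k,\ell]} \times \Delta^1$; since right anodyne maps are stable under pushout, this step is right anodyne, and composing these for the endpoints $k, k+1, \dots$ up to $\ell$ (note $\ell > 0$ guarantees the range is nonempty and $0 < k \le \ell$ keeps us omitting only genuinely-initial vertices) gives the result.

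The main obstacle is purely bookkeeping: getting the pushout square identifying $\Delta^{[k-1,\ell]}$ as the mapping cylinder quotient of $\Delta^{[k,\ell]} \times \Delta^1$ exactly right, including checking that the "collapse $\Delta^{[k,\ell]}\times\{0\}$ to a point" map really does produce $\Delta^{[k-1,\ell]}$ and not some degenerate variant. I expect no conceptual difficulty — this is the standard fact that the inclusion of the final face (or, dually, omission of an initial vertex) into a simplex is right anodyne, which is implicitly what the proof of Lemma \ref{lem:make it pointed} already uses. I would keep the written proof to one or two sentences citing \cite{lurie2009higher}*{Corollary 2.1.2.7} or the cited Proposition 2.1.2.6${}^{\op}$, plus the induction on the number of omitted initial vertices.
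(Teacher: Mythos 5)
There is a genuine gap, and it sits exactly at the step you dismissed as ``purely bookkeeping.'' Your key claim is that $\Delta^{[k-1,\ell]}$ is the pushout $(\Delta^{[k,\ell]}\times\Delta^1)\sqcup_{\Delta^{[k,\ell]}\times\{0\}}\Delta^{\{k-1\}}$, i.e.\ the quotient of the cylinder collapsing one end, so that $\Delta^{[k,\ell]}\hookrightarrow\Delta^{[k-1,\ell]}$ becomes a pushout of the right anodyne map $\Delta^{[k,\ell]}\times\{1\}\hookrightarrow\Delta^{[k,\ell]}\times\Delta^1$. This identification is false for simplicial sets: collapsing $K\times\{0\}$ in $K\times\Delta^1$ does not give the cone $\Delta^0\star K$, because simplices of the cylinder that ought to become degenerate in the cone are not killed by that quotient. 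Already for $\ell-k=1$, the quotient $(\Delta^1\times\Delta^1)/(\Delta^1\times\{0\})$ has two nondegenerate $2$-simplices and four nondegenerate edges, whereas $\Delta^2$ has one and three; the canonical collapse map $\Delta^{[k,\ell]}\times\Delta^1\to\Delta^{[k-1,\ell]}$ exists but is not an isomorphism onto the quotient, so ``right anodyne maps are stable under pushout'' cannot be invoked. Your alternative routes in the first paragraph are also not usable as stated: the cone-point inclusion $\{v\}\hookrightarrow K^{\vartriangleright}$ is indeed right anodyne, but what is needed here is the inclusion of the \emph{base} of a left cone, $K\hookrightarrow\Delta^0\star K$, and that is false for general $K$ (take $K=\partial\Delta^1$: two points into $\Lambda^2_0$ is not even a weak homotopy equivalence), so the simplex structure must be used.

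The statement you want for the single step is nevertheless true, and your outline can be repaired by replacing ``pushout'' with ``retract'': the order-preserving maps $s\colon[0,m]\to[1,m]\times[1]$, $s(0)=(1,0)$, $s(i)=(i,1)$, and $t\colon[1,m]\times[1]\to[0,m]$, $t(i,0)=0$, $t(i,1)=i$, satisfy $ts=\mathrm{id}$ and exhibit $\Delta^{[1,m]}\hookrightarrow\Delta^{m}$ as a retract of $\Delta^{[1,m]}\times\{1\}\hookrightarrow\Delta^{[1,m]}\times\Delta^1$, which is right anodyne by the $\mathrm{op}$ of \cite{lurie2009higher}*{Proposition 2.1.2.6}; since right anodyne maps are closed under retracts and composition, your reduction ``omit one initial vertex at a time'' then goes through. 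For comparison, the paper avoids the cylinder altogether and argues by induction on $\ell$ using right horns: the base case is $\Lambda^1_1\subset\Delta^1$, and for the inductive step one reduces to $k=1$ (as you do) and factors $\Delta^{[1,\ell]}\subset\Lambda^\ell_\ell\subset\Delta^\ell$, the first inclusion being right anodyne by the inductive hypothesis and the second a generating right anodyne map. Either repair is short, but as written your central pushout square does not exist, so the proposal is not yet a proof.
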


\begin{proof}

Let us prove this by induction on $\ell$. This follows by definition for $\ell=1$. Let $\ell>1$ and $0<k\le\ell$. Since the inclusion $\Delta^{[k,\ell]}\subset\Delta^{[1,\ell]}$ is right anodyne by hypothesis, we can assume that $k=1$. Then we can see that the inclusion $\Delta^{[1,\ell]}\subset\Lambda^\ell_\ell$ is right anodyne by hypothesis, thus we are done.

\end{proof}

\section*{Conclusion}

As stated in Corollary \ref{cor:adjoint action}, a semi-direct product of $\infty$-group sheaves $1\to A\to\Hat G\to G\to1$ is classified by a pointed map $\mathbf BG\to\mathbf BA\sslash\BAut\mathbf BA$, which we can view as the $\infty$-categorical version of the adjoint action $G\to\Aut A$.

Towards constructing Lie algebras of $\infty$-group sheaves, the next step will be to specialize to the case of the semidirect product $0\to\Lie G\to TG\to G\to1$, which can be constructed analogously to the classical way in Homotopical Algebraic Geometry \cite{toen2008homotopical}*{\S1.4.1}. Following \cite{sga3}*{Expos\'e II}, we will first have to show that the adjoint action $\mathbf BG\to\mathbf B(\Lie G)\sslash\BAut\mathbf B(\Lie G)$ is $\mathscr O_S$-linear.

\begin{bibdiv}
\begin{biblist}

\bib{sga3}{book}{
   title = {Sch\'emas en groupes (SGA 3)},
   author = {Artin, Michael},
   author = {Bertin, Jean--\'Etienne},
   author = {Demazure, Michel},
   author = {Gabriel, Pierre},
   author = {Grothendieck, Alexander},
   author = {Raynaud, Michel},
   author = {Serre, Jean--Pierre},
   editor = {Gille, Philippe},
   editor = {Polo, Patrick},
   publisher = {Soci\'et\'e Math\'ematique de France},
   year = {2011},
   label = {SGA3},
   }
   
   \bib{jardine2015local}{book}{
   author={Jardine, John F.},
   title={Local homotopy theory},
   series={Springer Monographs in Mathematics},
   publisher={Springer, New York},
   date={2015},
   pages={x+508},
   isbn={978-1-4939-2299-4},
   isbn={978-1-4939-2300-7},
   review={\MR{3309296}},
   doi={10.1007/978-1-4939-2300-7},
}

\bib{lurie2009higher}{book}{
   author={Lurie, Jacob},
   title={Higher topos theory},
   series={Annals of Mathematics Studies},
   volume={170},
   publisher={Princeton University Press, Princeton, NJ},
   date={2009},
   pages={xviii+925},
   isbn={978-0-691-14049-0},
   isbn={0-691-14049-9},
   review={\MR{2522659 (2010j:18001)}},
   doi={10.1515/9781400830558},
   eprint={http://www.math.harvard.edu/~lurie/papers/highertopoi.pdf},
}

\bib{nikolaus2014principal}{article}{
date={2014},
issn={2193-8407},
journal={Journal of Homotopy and Related Structures},
doi={10.1007/s40062-014-0083-6},
title={Principal $\infty$-bundles: general theory},
url={http://dx.doi.org/10.1007/s40062-014-0083-6},
publisher={Springer Berlin Heidelberg},
keywords={Nonabelian cohomology; Higher topos theory; Principal bundles},
author={Nikolaus, Thomas},
author={Schreiber, Urs},
author={Stevenson, Danny},
pages={1--53},
}

\bib{toen2008homotopical}{article}{
   author={To{\"e}n, Bertrand},
   author={Vezzosi, Gabriele},
   title={Homotopical algebraic geometry. II. Geometric stacks and
   applications},
   journal={Mem. Amer. Math. Soc.},
   volume={193},
   date={2008},
   number={902},
   pages={x+224},
   issn={0065-9266},
   review={\MR{2394633 (2009h:14004)}},
   doi={10.1090/memo/0902},
   eprint={http://perso.math.univ-toulouse.fr/btoen/files/2012/04/HAGII.pdf},
}

\end{biblist}
\end{bibdiv}

\end{document}